\DeclareMathAlphabet\mathbfcal{OMS}{cmsy}{b}{n}
\newtheorem{theorem}{Theorem}[section]
\newtheorem{lemma}[theorem]{Lemma}
\newtheorem{corollary}[theorem]{Corollary}
\theoremstyle{definition}
\newtheorem{remark}[theorem]{Remark}
\newcolumntype{P}[1]{>{\centering\arraybackslash}p{#1}}
\newcommand\makebig[2]{%
  \@xp\newcommand\@xp*\csname#1\endcsname{\bBigg@{#2}}%
  \@xp\newcommand\@xp*\csname#1l\endcsname{\@xp\mathopen\csname#1\endcsname}%
  \@xp\newcommand\@xp*\csname#1r\endcsname{\@xp\mathclose\csname#1\endcsname}%
}
\renewcommand{\vec}[1]{\mbox{\boldmath$#1$}}
\newcommand{\dif}{\mathrm{d}}
\newcommand{\im}{\mathrm{i}}
\newcommand{\vcE}{{\bm {\mathcal E}}}
\newcommand{\vcH}{{\bm {\mathcal H}}}
\newcommand{\vcJ}{{\bm {\mathcal J}}_0}
\newcommand{\vE}{{\bm {E}}}
\newcommand{\vEb}{{\bm {E}_0}}
\newcommand{\vHb}{{\bm {H}_0}}
\newcommand{\vEa}{{\bm {E}}_\alpha}
\newcommand{\vHa}{{\bm {H}}_\alpha}
\newcommand{\vEd}{{\bm {E}}_\Delta}
\newcommand{\vHd}{{\bm {H}}_\Delta}
\newcommand{\vJ}{{\bm {J}}_0}
\newcommand{\vz}{{\bm {z}}}
\newcommand{\vn}{{\bm {n}}}
\newcommand{\ve}{{\bm {e}}}
\newcommand{\va}{{\bm {a}}}
\newcommand{\vb}{{\bm {b}}}
\newcommand{\vx}{{\bm {x}}}
\newcommand{\vy}{{\bm {y}}}
\newcommand{\vu}{{\bm {u}}}
\newcommand{\vw}{{\bm {w}}}
\newcommand{\vwz}{{\bm {w}}_0}
\newcommand{\vxi}{{\bm {\xi}}}
\newcommand{\vv}{{\bm {v}}}
\newcommand{\vF}{{\bm {F}}}
\newcommand{\vD}{{\bm {D}}}
\newcommand{\vth}{{\bm {\theta}}}
\newcommand{\vpsi}{{\bm {\psi}}}
\newcommand{\vphi}{{\bm {\phi}}}
\newcommand{\vzero}{{\bm {0}}}
\newcommand{\vX}{{\bm {X}}_\alpha}
\newcommand{\vR}{{\bm {R}}}
\newcommand{\vT}{{\bm {T}}}
\newcommand{\vr}{{\bm {r}}}
\DeclareMathOperator*{\esssup}{ess \, sup}
\begin{document}
\title{Characterising small objects in the regime between  {the eddy current model and wave propagation}}

\author{P.D. Ledger$^\dagger$  and W.R.B. Lionheart$^\ddagger$\\
$^\dagger$School of Computer Science \& Mathematics, Keele University \\
$^\ddagger$Department of Mathematics, The University of Manchester\\
Corresponding author: p.d.ledger@keele.ac.uk}

\maketitle
\section*{Abstract}

Being able to characterise objects at low frequencies, but above the limiting frequency of the eddy current approximation of the Maxwell system, is important for improving current metal detection technologies. Importantly, the upper frequency limit of the eddy current model depends on the object topology and on its materials, with the maximum frequency being much smaller for certain geometries compared to others of the same size and materials. Additionally, the eddy current model breaks down at much smaller frequencies for highly magnetic conducting materials compared to non-permeable objects (with similar conductivities, sizes and shapes) and, hence, characterising small magnetic objects made of permeable materials using the eddy current at typical frequencies of operation for a metal detector  is not always possible. To address this, we derive a new asymptotic expansion for permeable highly conducting objects that is valid for small objects  {and holds for frequencies just} beyond the eddy current limit. The leading order term we derive leads to new forms of object characterisations in terms of polarizability tensor object descriptions where the coefficients can be obtained from solving vectorial transmission problems.
We expect these new characterisations to be important when considering objects at greater stand-off distance from the coils, which is important for safety critical applications, such as the identification of landmines, unexploded ordnance and concealed weapons. We also expect our results to be important when characterising artefacts of archaeological and forensic significance at greater depths than the eddy current model allows and to have further applications parking sensors and improving the detection of hidden, out-of-sight, metallic objects.

\noindent{\bf Keywords:} Asymptotic analysis, time harmonic Maxwell, eddy current, inverse problems, magnetic polarizability tensor, metal detection.

\noindent {\bf 2020 Mathematics Subject Classification}: 35R30; 35B30; 35Q61; 78A25; 78A46

\section{Introduction}

Improving the characterisation of metal objects from electromagnetic field perturbations, when their location is not in the immediate proximity of the exciting and sensing coils, is important for a range of applications. For example, while traditional walk-through metal detectors are used in transport hubs, court rooms, museums, galleries and at concerts and public events for identifying potential threat objects in close proximity (typically much less than a  {metre})  of the coils, there is also considerable interest from security and police forces in being able to identify potential terrorist threat objects, including guns, knives and improvised weapons, at greater stand-off distances (in the order of several metres).
The ability to characterise objects at greater stand-off distances could also bring benefits to other metal detection applications including improving safety for anti-personal landmine disposal teams, by identifying  unexploded ordnance and landmines at greater distances,  improving the detection of smaller, and deeper buried  artefacts of archaeological significance and improving the detection  of items  in forensic searches, which are beyond the reach of current metal detection technologies. There are further applications in parking sensors and in the detection of out-of-vision objects, such as cyclists, for large vehicles, such as lorries and busses.

The metal detection problem is usually described by the eddy current approximation of the Maxwell system, since the conductivities of the objects of interest are high and the frequencies of approximation are low.  The eddy current approximation neglects the displacement currents in the Maxwell system, which are responsible for wave propagation and is often applied if the wavelength  is large compared to the object size. Ammari, Buffa and N\'ed\'elec~\cite{ammaribuffa2000} show, depending on the topology of the object and its materials, that the eddy current approximation is either  first or second order accurate when considering an asymptotic expansion of the electric and magnetic fields  as the excitation
frequency tends to zero. In practice, this means that the eddy current model will  { break} down at a lower frequency for a horse shoe shaped conductor compared to a similar sized sphere made of the same material. Since the wavelength needs to be large compared to the object size,  this also means that the eddy current model breaks down at a 
lower frequency compared to non-magnetic objects of the same shape, size and conductivity. The modelling error in the eddy current approximation has been considered by Schmidt, Hiptmair and Sterz~\cite{schmidteddycurrent} who find that the approximation is accurate provided that
\begin{align}
C_1 \epsilon_* \mu_* \omega^2 \alpha^2 \ll 1,  \qquad C_2 \frac{\omega \epsilon_*}{ \sigma_*} \ll 1, \label{eqn:eddylimithip}
\end{align}
where $\epsilon_*$, $\mu_*$ and $\sigma_*$ denote the conducting object's permittivity, permeability and conductivity, respectively, $\omega$ is the angular frequency and $\alpha$ is a measure of the object size. The constants $C_1$ and $C_2$ depend on the topology of the object and, if they are of a moderate size, (\ref{eqn:eddylimithip}) reduces approximately down to the condition that the wavelength needs to be large compared to the object size (quasi-static limit) and $\epsilon_* \omega \ll \sigma_*$ (conductivities are large), respectively. They also provide a procedure for estimating $C_1$ and $C_2$.

For the eddy current problem, Ammari, Chen, Chen, Volkov and Garnier~\cite{Ammari2014} have derived the leading order term in an asymptotic expansion for the perturbation of the magnetic field caused by presence of a permeable conducting object as it size tends to zero for the eddy current problem. There are conditions imposed on the material parameters, which allows this result to be applied to the case of constant $\sigma_*$ and $\mu_*$ as $\alpha \to 0$.
 We have  {shown,~\cite{LedgerLionheart2015},} that the leading order term they have been derived can be simplified and provides an object characterisation, in terms of a rank 2 magnetic polarizability tensor (MPT) that depends on the object shape, object size, its material parameters and the frequency of excitation, but is independent of its position. The object location is separated from the other object characterisation information with the background field being evaluated at the object position. 
In~\cite{LedgerLionheart2018g} we have extended the leading order term obtained by Ammari {\it et al} in~\cite{Ammari2014} to a complete expansion of the perturbed magnetic field and introduced the concept of generalised MPTs, which provide improved object characterisation.
We have derived alternative forms of the MPT and considered spectral behaviour in~\cite{LedgerLionheart2019}. Together with Wilson, we have proposed an efficient computational procedure for the computation of the MPT spectral signatures~\cite{ben2020} and shown how it can be used characterise threat and non-threat objects of relevance to security screening~\cite{ledgerwilsonamadlion2021} and proposed an approach to classification using machine learning~\cite{benclass}.

For the full Maxwell system, Ammari, Vogelius and Volkov~\cite{ammari2001} have derived the leading order term in an asymptotic expansion of the perturbed electric and magnetic fields caused by the presence of an object as its size tends to zero. In this case, there is no simplification in the Maxwell system and wave propagation effects are included. The resulting expansions shows that leading order provides an object characterisation is in terms of the simpler P\'oyla-Szeg\"o tensor parameterised by contrasts in (complex) permittivity and permeability, which can be computed from a scalar transmission problem.  Unlike the eddy current case, conditions are not imposed on the material parameters and, hence, $\epsilon_*, \sigma_*$ and $\mu_*$ are allowed to change as $\alpha \to 0$.  {In the limiting case of magnetostatics},  it can be shown that the MPT  simplifies to the simpler P\'oyla-Szeg\"o tensor description~\cite{LedgerLionheart2019},  {whose coefficients are obtained by solving scalar transmission problems, but in the eddy current regime the calculation of the MPT coefficients} requires the solution of vectorial curl-curl transmission problems. However, it is not clear what form the characterisation takes in the regime just beyond the eddy current limit.

Our present work contributes to understanding the regime where frequencies are small, but the displacement currents remain important through the following novelties:

\begin{enumerate}

\item We derive a new asymptotic formula for the magnetic field perturbation caused by the presence of a conducting permeable object as its size tends to zero. We establish the leading order for the full Maxwell system with specific constraints applied to the object's materials and the frequency of excitation.

\item  {We obtain new forms of polarizability tensors for characterising objects}, whose coefficients are obtained by post-processing solutions to vectorial transmission problems.

\item We show that our expansion reduces to the result obtained by Ammari {\it et al}~\cite{Ammari2014} if the eddy current approximation is made.  {Our new polarizability tensors reduce to an 
object characterisation using the MPT in this case.}

\item Additionally, we obtain an expansion of the form obtained by Ammari {\it et al}~\cite{ammari2001} if the conditions we impose are relaxed.

\end{enumerate}

The material proceeds as follows: In Section~\ref{sect:notation} we introduce some notation that will aid the presentation of the material. Then, in Section~\ref{sect:mathmodel} we introduce the mathematical model that will be of interest in this work. Section~\ref{sect:main} presents our main result, the proof of which will be established subsequent sections. Section~\ref{sect:energy} establishes some energy estimates, Section~\ref{sect:integral} establishes an integral representation formula and Section~\ref{sect:proof} the remaining parts of the proof. Then, Section~\ref{sect:altform} presents alternative forms of our main result and presents simplifications and relaxation of assumptions that make comparisons with previous object characterisations possible.

\section{Notation} \label{sect:notation}
For what follows  it is beneficial to introduce the following notation: We will use boldface for vector quantities  (e.g. $\vu$) and denote by $\ve_j$, $j=1,2,3$ the units vectors associated with an orthonormal coordinate system. We denote the $j$-th component of a vector $\vu $ in this coordinate system by $(\vu )_j = \vu  \cdot \ve_j = u_j$. 
We will use calligraphic symbols to denote rank 2 tensors,  e.g. ${\mathcal N} = {\mathcal N}_{ij} \ve_i \otimes \ve_j$, where Einstein summation convention is implied, and denote their coefficients by ${\mathcal N}_{ij}$. 

We recall that for $0\le \ell < \infty$, $0\le p< \infty$,
\begin{equation}
\| \vu  \|_{W^{\ell ,p}(B_\alpha)} : = \left ( \sum_{j=0}^\ell  \int_{B_\alpha} | \vD^j( \vu (\vx ))|^p \dif \vx \right)^{1/p}, \nonumber 
\end{equation}
where the derivatives are defined in a weak sense and 
\begin{equation}
\| \vu  \|_{W^{\ell ,\infty}(B_\alpha)} : =\esssup_{\vx  \in B_\alpha} \sum_{j=0}^\ell | \vD^j( \vu (\vx ))| \nonumber .
\end{equation}

\section{Mathematical Model} \label{sect:mathmodel}
\subsection{Governing equations}

For linear materials, the time harmonic Maxwell equations are
\begin{subequations}
\begin{align}
\nabla \times \vcE & = \im \omega \mu_0 \vcH, \\
\nabla \times \vcH  &=  \sigma \vcE + \vcJ -\im \omega \epsilon \vcE, \\ 
\nabla \cdot ( \epsilon \vcE )  & = \frac{1}{\im \omega} \nabla \cdot ( \sigma \vcE ) , \\
\nabla \cdot ( \mu \vcH ) & = 0 ,
\end{align}
\end{subequations}
where $\vcE$ and $\vcH$ denotes the complex amplitudes of the electric and magnetic field intensity vectors, respectively, for an assumed $e^{-\im \omega t}$ time variation with angular frequency $\omega>0 $ and $\im =\sqrt{-1}$. In addition, $\vcJ$ denotes the complex amplitude of an external solenoidal  current source and the parameters $\epsilon, \mu$ and $\sigma$ denote the permittivity, permeability and conductivity, respectively, and satisfy
\begin{align}
0 < \epsilon^{\text{min}} \le \epsilon \le \epsilon^{\text{max}} < \infty, \qquad 0 < \mu^{\text{min}} \le \mu \le \mu^{\text{max}} < \infty, \qquad  0 \le \sigma \le \sigma^{\text{max}} < \infty .
\end{align}
As standard (e.g. Monk~\cite{monk}), the scaled fields are introduced as $\vEa = \epsilon_0^{1/2} \vcE$, $\vHa = \mu_0^{1/2} \vcH$, $\vJ = \mu_0^{1/2}\vcJ$, where $\epsilon_0 = 8.854 \times 10^{-12} $ F/m and $\mu_0 = 4\pi \times 10^{-7}$ H/m are the free space values of the permittivity and permeability, respectively, leading to 
\begin{subequations}\label{eqn:maxwell}
\begin{align}
\nabla \times \vEa & = \im k \tilde{\mu}_r \vHa,   \\
\nabla \times \vHa & = \vJ - \im k \tilde{\epsilon}_r \vEa,  \\
\nabla \cdot ( \tilde{\epsilon}_r \vEa )  & = 0 , \\
\nabla \cdot ( \tilde{\mu}_r \vHa )  & = 0,
\end{align} 
\end{subequations}
where
\begin{align}
\tilde{\epsilon}_r: = \frac{1}{\epsilon_0} \left ( \epsilon + \frac{\im \sigma}{\omega}  \right ) , \qquad \tilde{\mu}_r : = \frac{\mu}{\mu_0} , \qquad k:= \omega ( \epsilon_0 \mu_0) ^{1/2} , \nonumber 
\end{align}
and $\tilde{\epsilon}_r, \tilde{\mu}_r$ are, in general, functions of position.

\subsection{Perturbed field formulation}

We describe a single connected inclusion by $B_\alpha: = \alpha B + {\bm z}$, which means that it could be thought of a unit-sized object $B$ located at the origin, scaled by $\alpha$ and translated by ${\bm z}$.
Its boundary, $\partial B_\alpha$, is equipped with unit normal outward normal $\vn$ and we assume that the object is homogeneous with material coefficients $\epsilon_*, \mu_*$ and $\sigma_*$.
The object is surrounded by an unbounded region of free space $B_\alpha^c : = {\mathbb R}^3 \setminus \overline{B_\alpha}$ with material coefficients $\epsilon_0, \mu_0$ and $\sigma=0$ so that, henceforth,
\begin{align}
\tilde{\epsilon}_r (\vx)  := \left \{ \begin{array}{ll} \epsilon_r :=  \frac{1}{\epsilon_0} \left ( \epsilon_* + \frac{\im \sigma_*}{\omega } \right )  & \vx \in B_\alpha  \\
1 & \vx\in B_\alpha^c \end{array} \right . ,
\qquad \tilde{\mu}_r  :=\left \{ \begin{array}{ll} \mu_r : = \frac{\mu_*}{\mu_0}  & \vx \in B_\alpha  \\
1 & \vx \in B_\alpha^c \end{array} \right . . \nonumber
\end{align}
Furthermore, we assume that $\vJ$ has support only in $B_\alpha^c$ and for it to be located away from $B_\alpha$.
The electric and magnetic fields obey the standard transmission conditions
\begin{subequations}\label{eqn:transcond}
\begin{align} 
[\vn  \times \vEa]_{\Gamma_\alpha} & = \vzero , \\
[\vn \times \vHa]_{\Gamma_\alpha} & = \vzero  , \\
[\vn \cdot\tilde{\epsilon}_r \vEa]_{\Gamma_\alpha} & = 0 , \\
[\vn \cdot \tilde{\mu}_r \vHa]_{\Gamma_\alpha}  & = 0  ,
\end{align}
\end{subequations}
on $\Gamma_\alpha:= \partial B_\alpha$ where $[\cdot]_{\Gamma_\alpha} = \cdot|_+ -\cdot |_-$ denotes the jump and $+/-$  the evaluation just outside/inside of $B_\alpha$, respectively. In absence of $B_\alpha$, the background (incident) fields $\vEb$ and  $\vHb$ satisfy (\ref{eqn:maxwell}) with $\tilde{\epsilon}_r = \tilde{\mu}_r=1$ in ${\mathbb R}^3 $. The fields 
$\vEd:=\vEa-\vEb$ and $\vHd:=\vHa-\vHb$, which represent the perturbation in the electric and magnetic fields due to the presence of $B_\alpha$, respectively, satisfy the radiation conditions
\begin{align}
\lim_{x\to \infty} x \left ( ( \nabla \times \vEd) \times \hat{\vx} - \im k  \vEd \right ) =&  \vzero, \nonumber\\
\lim_{x\to \infty}  x\left ( ( \nabla \times \vHd)  \times \hat{\vx} - \im k  \vHd \right ) =&  \vzero, \nonumber
\end{align}
where $x=|\vx|$ and $\hat{\vx} = \vx / x$. By eliminating $\vHa$, we arrive at the following transmission problem 
\begin{subequations} \label{eqn:transeproblem}
\begin{align} 
\nabla \times {\mu}_r^{-1} \nabla \times \vEd -k^2 \epsilon_r \vEd &= \nabla\times ( 1- \mu_r^{-1}) \nabla \times \vEb -k^2(1-\epsilon_r ) \vEb &&  {} \text{in $B_\alpha$}, \\
\nabla \times \nabla \times \vEd -k^2  \vEd &= \vzero &&\text{in $B_\alpha^c$}, \\
\nabla \cdot ( \tilde{\epsilon}_r \vEd) & = 0 && \text{in ${\mathbb R}^3$}, \\
[\vn \times \vEd]_{\Gamma_\alpha} & = \vzero &&\text{on $\Gamma_\alpha$}, \\
[\vn \times \tilde{\mu}_r^{-1} \nabla \times \vEd]_{\Gamma_\alpha}  & = -[\tilde{\mu}_r^{-1}]_{\Gamma_\alpha} \vn \times \nabla  \times \vEb &&\text{on $\Gamma_\alpha$}, \\
\lim_{x\to \infty} x \left (  ( \nabla \times \vEd) \times \hat{\vx} - \im k  \vEd \right ) &=  \vzero,
\end{align} 
\end{subequations}
for $\vEd$, while an analogous transmission problem could also be derived for $\vHd$. Given the solenoidal behaviour of $\vEb$,  the condition $\nabla \cdot ( \tilde{\epsilon}_r \vEd)  = 0$ in ${\mathbb R}^3$ can be disregarded provided that $k^2>0$ is not an eigenvalue of (\ref{eqn:transeproblem}), which we shall assume throughout.

Introducing the function space
\begin{equation}
\vX := \left \{ \vu: \nabla \times \vu \in (L^2({\mathbb R}^3))^3, \vu \in (L^2({\mathbb R}^3))^3, \lim_{x\to \infty} x \left ( ( \nabla \times \vu) \times \hat{\vx} - \im k  \vu \right )=\vzero \right \} \nonumber ,
\end{equation}
 the weak form of (\ref{eqn:transeproblem}) is: Find $\vEd \in\vX$ s.t.
\begin{align}
\int_{B_\alpha^c} \left (  \nabla \times  \vEd \cdot \nabla \times \overline{\vv} - k^2 \vEd\cdot \overline{\vv} \right ) \dif \vx + &  \int_{B_\alpha} \left (  \mu_r^{-1} \nabla \times  \vEd \cdot \nabla \times \overline{ \vv} -k^2 \epsilon_r  \vEd\cdot \overline{\vv}  \right ) \dif \vx \nonumber\\
= & \int_{B_\alpha} \left ( (1-  \mu_r^{-1}) \nabla \times \vEb \cdot \nabla \times \overline{ \vv} -k^2 (1-  \epsilon_r)  \vEb\cdot \overline{\vv}  \right ) \dif \vx,
\end{align}
for all $\vv \in \vX$.  

We also recall that
\begin{equation}
G_k(\vx,\vy) := \frac{e^{\im k | \vx-\vy|}}{4\pi |\vx-\vy|} , \nonumber
\end{equation}
is the Helmholtz's free space Green's function, which becomes the Laplace free space Green's function if $k=0$.

\section{Main result} \label{sect:main}

Our goal is to derive an asymptotic formula for $\vHd(\vx) = (\vHa-\vHb)(\vx)$ for $\vx$ away from $B_\alpha$ as $\alpha\to 0$. We are interested in the regime for which  {$ \nu = O(1)$  as $\alpha \to 0$} where
\begin{equation}
\nu=\alpha^2 k^2 (\epsilon_r-1) = \nu_{\rm r} + \im \nu_\im ,\qquad  \nu_{\rm r} =(\epsilon_*- \epsilon_0) \mu_0 \omega^2 \alpha^2, \qquad \nu_\im = \sigma_*\mu_0 \omega \alpha^2, \nonumber
\end{equation}
and  $\mu_r =O(1)$.   

Our treatment will allow an  extension of the results described in~\cite{Ammari2014,LedgerLionheart2015,LedgerLionheart2018g}, which considered the eddy current model,   {where the quasi-static assumption (i.e. $\alpha \ll \lambda_0= 2\pi / k$) and large conductivities ($\sigma_* \gg \epsilon_* \omega$) were assumed, and, instead, considered the regime where $ \nu_\im =O(1)$ and $\mu_r =O(1)$ as $\alpha \to 0$.  Fixing $\nu=O(1)$, rather than $ \nu_\im =O(1)$, means that, in addition to including the case where $\sigma_*$ and $\omega $ are constant, we  include the situation where $\epsilon_*$ is constant. These quantities are also allowed to decrease with $\alpha$, but not faster than $1/\alpha^2$.  This allows us to consider how the characterisation of $B_\alpha$ changes for frequencies in the regime where neither the displacement nor Ohmic currents dominate.}
 We focus on  $\vHd(\vx)  = (\vHa-\vHb)(\vx)$ to allow ease of comparison with these earlier results. %, although similar steps could be followed  for $\vEd(\vx) =  (\vEa-\vEb)(\vx)$. 
 Our main result is
\begin{theorem} \label{thm:main}
For $\vx$ away from $B_\alpha = \alpha B + \vz$, the following expansion of $\vHd (\vx) = (\vHa-\vHb)(\vx)$ holds
\begin{align}
(\vHd(\vx))_j = & -\im k (\nabla_x G_k(\vx,\vz))_p \varepsilon_{jpr} ( {\mathcal A}_{ri} ( \vHb(\vz))_i + {\mathcal B}_{ri}(\vEb(\vz))_i )\nonumber \\
& + (\vD_x^2 G_k(\vx,\vz))_{\ell m} \varepsilon_{j\ell s} {\mathcal C}_{msi} ( \vHb(\vz))_i \nonumber \\
& +( (\vD_x^2 G_k(\vx,\vz))_{j r}+ k^2 \delta_{jr} G_k(\vx,\vz)) {\mathcal N}_{ri} ( \vHb(\vz))_i + (\vR(\vx))_j , \label{eqn:main}
\end{align}
where ${\mathcal A}, {\mathcal B}, {\mathcal C}$ and ${\mathcal N}$ are polarizability tensors with coefficients
\begin{subequations}
\begin{align}
{\mathcal A}_{ri}   := & \frac{ \im k \alpha^4 (\epsilon_r -1)}{2} \ve_r \cdot  \int_{B}  \left (  \ve_i \times \vxi + \vth_i \right ) \dif \vxi , \\
{\mathcal B}_{ri}   := & \alpha^3  (\epsilon_r -1) \ve_r\cdot  \int_B \left ( \ve_i   + \vphi_i \right ) \dif \vxi ,  \\
{\mathcal C}_{msi} :=& -\frac{k^2 \alpha^5 (\epsilon_r -1)}{2} \ve_s \cdot \int_B \xi_m  \left (  \ve_i \times \vxi + \vth_i \right ) \dif \vxi   , \\
{\mathcal N}_{ri}   := &  \alpha^3 (1-\mu_r^{-1}) \ve_r \cdot \int_B \left ( \ve_i + \frac{1}{2} \nabla \times \vth_i \right ) \dif \vxi,
\end{align}
\end{subequations}
that depend on the solution to the transmission problems
\begin{subequations} 
\begin{align}
\nabla_\xi \times \mu_r^{-1} \nabla_\xi \times \vth_i -k^2 \alpha^2  \epsilon_r  \vth_i=  &  -k^2\alpha^2 (1-\epsilon_r)\ve_i\times \vxi && \text{in $B$},\\
\nabla_\xi \times \nabla_\xi  \times\vth_i -k^2\alpha^2  \vth_i = & \vzero && \text{in $B^c:={\mathbb R}^3 \setminus \overline{B}$},\\
\nabla_\xi\cdot \vth_i   = & 0 && \text{in ${\mathbb R}^3$},\\
[\vn \times\vth_i ]_\Gamma=\vzero, \qquad [\vn \times \tilde{\mu}_r^{-1} \nabla_\xi \times\vth_i ]_\Gamma =& -2 (1-\mu_r^{-1})\vn \times \ve_i && \text{on $\Gamma:= \partial B$},\\
\lim_{\xi \to \infty}\xi \left ( ( \nabla_\xi \times \vth_i ) \times  \hat{\vxi} - \im k  \alpha\vth_i \right ) = &  \vzero,
\end{align}
\end{subequations}
and 
\begin{subequations} 
\begin{align}
\nabla_\xi \times \mu_r^{-1} \nabla_\xi \times \vphi_{i}  -k^2 \alpha^2 \epsilon_r \vphi_{i} =  &  -k^2\alpha^2 (1-\epsilon_r)
\ve_i  && \text{in $B$} ,\\
\nabla_\xi \times \nabla_\xi  \times\vphi_{i} -k^2\alpha^2 \vphi_{i}= & \vzero && \text{in $B^c$},\\
\nabla_\xi\cdot \vphi_{i}   = & 0 && \text{in ${\mathbb R}^3$},\\
[\vn \times\vphi_{i} ]_\Gamma =\vzero, \qquad [\vn \times \tilde{\mu}_r^{-1} \nabla_\xi \times\vphi_{i} ]_\Gamma  =& \vzero&& \text{on $\Gamma$},\\
\lim_{\xi \to \infty} \xi \left ( ( \nabla_\xi \times\vphi_{i} ) \times \hat{\vxi} - \im k \alpha\vphi_{i} \right ) = &  \vzero.
\end{align}
\end{subequations}
The residual $\vR(\vx)$  satisfies 
$ |\vR(\vx)| \le C \left ( \alpha^4  \| \vHb \|_{W^{2,\infty}(B_\alpha)} + \alpha^4 k \left (  |\epsilon_r-1| + 
\alpha k^2   \left  |1 -\frac{1}{\epsilon_r} \right | \right )  \| \vEb \|_{W^{1,\infty}(B_\alpha)} \right )$.  \end{theorem}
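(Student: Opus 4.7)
The approach is a classical small-inclusion asymptotic analysis in the spirit of Ammari--Vogelius--Volkov~\cite{ammari2001} and Ammari et al.~\cite{Ammari2014}, but adapted to the non-standard scaling $\nu = \alpha^2 k^2(\epsilon_r-1) = O(1)$ and $\mu_r = O(1)$. The overall chain is: (i) establish energy estimates for $\vEd$ on $B_\alpha$ that are explicit in $\alpha$ in the stated regime (Section~\ref{sect:energy}); (ii) derive a Stratton--Chu style volume integral representation of $\vHd(\vx)$ for $\vx$ away from $B_\alpha$ (Section~\ref{sect:integral}); (iii) Taylor expand $G_k(\vx,\cdot)$ and the incident fields $\vEb, \vHb$ about $\vz$, and rescale the inclusion integral via the stretch $\vy = \vz + \alpha\vxi$; (iv) identify the leading part of $\vEd(\vz+\alpha\cdot)$ as a linear combination of the transmission solutions $\vth_i$ and $\vphi_i$; (v) collect the resulting integrals into the tensors $\mathcal A, \mathcal B, \mathcal C, \mathcal N$ and bound the remaining terms to obtain the claimed residual.

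For the energy estimate I would test the weak form with $\vv = \vEd$, split real and imaginary parts, and use the observation that, although $\epsilon_r$ may be as large as $O(\alpha^{-2})$, the combination $k^2\alpha^2\epsilon_r$ is $O(1)$ because $k^2\alpha^2(\epsilon_r-1) = \nu = O(1)$ and $k^2\alpha^2 \to 0$. Passing to the unit cell via $\vxi = (\vy-\vz)/\alpha$, the transmission problem (\ref{eqn:transeproblem}) for $\vEd$ becomes a radiating curl-curl problem on $B$ with coefficients $\mu_r, \nu, k\alpha$ that are uniformly bounded as $\alpha \to 0$, from which one obtains bounds of the form $\|\vEd\|_{L^2(B_\alpha)} + \alpha\|\nabla\times\vEd\|_{L^2(B_\alpha)} \lesssim \alpha^{5/2}\|\vHb\|_{W^{1,\infty}(B_\alpha)} + \alpha^{5/2}k|\epsilon_r-1|\,\|\vEb\|_{L^\infty(B_\alpha)}$.

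The integral representation follows by applying vector Green's identities to $\nabla\times\nabla\times\vEd - k^2 \vEd = 0$ in $B_\alpha^c$ together with the transmission conditions (\ref{eqn:transcond}) to convert boundary terms on $\Gamma_\alpha$ into a volume integral over $B_\alpha$ involving the contrasts $(1-\mu_r^{-1})$ and $(\epsilon_r-1)$ and the full field $\vEa = \vEb + \vEd$. Taking $\nabla\times$ then produces a representation of $\vHd(\vx)$ as
\begin{equation*}
\vHd(\vx) = \int_{B_\alpha} \Big[\,(1-\mu_r^{-1})\,\nabla_x\times\nabla_x G_k(\vx,\vy)\,(\nabla\times\vEa)(\vy) - \im k(\epsilon_r-1)\,\nabla_x\times G_k(\vx,\vy)\vEa(\vy)\,\Big]\dif\vy ,
\end{equation*}
to which the Taylor expansion $G_k(\vx, \vz+\alpha\vxi) = G_k(\vx,\vz) + \alpha\,\vxi\cdot\nabla_y G_k + \tfrac{\alpha^2}{2}(\vxi\otimes\vxi){:}\vD_y^2 G_k + O(\alpha^3)$, together with the expansions $\vEb(\vz+\alpha\vxi) = \vEb(\vz) + O(\alpha)$ and $\vHb(\vz+\alpha\vxi) = \vHb(\vz) + O(\alpha)$, is applied. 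Matching the rescaled field through the ansatz
\begin{equation*}
\vEd(\vz+\alpha\vxi) \approx \tfrac{1}{2}\im k\alpha\,\vth_i(\vxi)\,(\vHb(\vz))_i + \alpha\,\vphi_i(\vxi)\,(\vEb(\vz))_i,
\end{equation*}
verified by checking that the right-hand sides of (\ref{eqn:transeproblem}) rescale precisely to those of the $\vth_i$ and $\vphi_i$ problems, yields the four tensor integrals stated.

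The hard part will be the bookkeeping required for the sharp residual bound. Three things must be controlled simultaneously: the Taylor truncation errors of $G_k$ and of $\vEb, \vHb$ (each contributing their own power of $\alpha$), the contribution from the higher-order correctors in the expansion of $\vEd(\vz+\alpha\vxi)$ bounded via the energy estimates above, and the algebraic reorganisation of the second-order Green's function term. In particular, the combination $(\vD_x^2 G_k)_{jr} + k^2\delta_{jr}G_k$ appearing in the $\mathcal N$-contribution emerges from using $\nabla_x\times\nabla_x G_k = -\vD_x^2 G_k - k^2 G_k I$ (in the exterior) on the $(1-\mu_r^{-1})$ term, while the $\mathcal C$-contribution arises from the same second-order expansion paired with the $\nabla\times\vth_i$ component of the rescaled ansatz; disentangling these two cleanly without double-counting is the most delicate step and is what makes Section~\ref{sect:proof} non-trivial.
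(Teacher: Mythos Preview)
Your high-level architecture --- energy estimates, a volume integral representation, Taylor expansion of $G_k$ and the incident fields about $\vz$, rescaling to the unit cell, and identification of the leading part of the scattered field with the canonical transmission solutions --- is indeed the same as the paper's. The integral representation you write down is equivalent (after using $\nabla\times\vEa = \im k\mu_r\vHa$ in $B_\alpha$) to the paper's Lemma in Section~\ref{sect:integral}, and your observation that $(\vD_x^2 G_k)_{jr}+k^2\delta_{jr}G_k$ arises from $\nabla_x\times\nabla_x\times$ acting on $G_k$ is exactly how the ${\mathcal N}$-term is organised.

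However, the energy-estimate step as you describe it has a genuine gap. Testing the weak form with $\vv=\vEd$ gives you control of $\|\vEd\|_{L^2(B_\alpha)}$ and $\|\nabla\times\vEd\|_{L^2(B_\alpha)}$, but what the residual bound actually requires is an estimate on $\vEd$ \emph{minus its leading-order profile}, and at a rate one full power of $\alpha$ better than the profile itself. The paper achieves this not by bounding $\vEd$ and then subtracting, but by constructing an explicit comparison field: one introduces the second-order vector Taylor polynomial $\vF$ of $\vEb$ about $\vz$ (including the $\vD_z(\nabla_z\times\vEb(\vz))$ term), defines $\vw$ as the \emph{exact} solution of the transmission problem with $\vF$ in place of $\vEb$, and then tests the \emph{difference} equation with $\vv = \vEa-\vEb-\vw$. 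This yields $\|\nabla\times(\vEa-\vEb-\vw)\|_{L^2(B_\alpha)}\le C\alpha^{7/2}$ and, after a Friedrichs step, $\|\vEa-\vEb-\vw-\frac{\epsilon_r-1}{\epsilon_r}\nabla\phi_0\|_{L^2(B_\alpha)}\le C\alpha^{9/2}$. Without the second-order term in $\vF$ (equivalently, without the auxiliary corrector $\vpsi_{ij}$ in your ansatz for $\vw_0$) the bound degrades to $\alpha^{7/2}$ in $L^2$, and the term you call ``higher-order correctors'' would then contribute at $O(\alpha^3)$ rather than the claimed $O(\alpha^4)$ in the residual.

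Two further points your plan misses. First, $\vEa-\vEb-\vw$ is \emph{not} divergence-free in $B_\alpha$, so the Friedrichs inequality cannot be applied directly; the paper introduces the explicit scalar $\phi_0$ (with $\nabla^2\phi_0=\nabla\cdot\vF$) and works with $\vEa-\vEb-\vw-\frac{\epsilon_r-1}{\epsilon_r}\nabla\phi_0$ instead, which is divergence-free and has a controllable normal trace on $\Gamma_\alpha$. Second, your ansatz $\vEd(\vz+\alpha\vxi)\approx \tfrac{\im k\alpha}{2}\vth_i(\vHb(\vz))_i + \alpha\,\vphi_i(\vEb(\vz))_i$ has the wrong scaling on the $\vphi_i$ contribution: since $\vF$ contains the undifferentiated constant $\vEb(\vz)$, the correct weight is $(\vEb(\vz))_i\vphi_i$ with no $\alpha$ prefactor (cf.\ the $\alpha^{-1}$ in the paper's expression for $\vw_0$), which is precisely what makes ${\mathcal B}_{ri}$ an $O(\alpha^3)$ tensor.
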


The proof of this theorem follows in Section~\ref{sect:proof}. Beforehand,  we derive results that our proof will draw on.

\section{Energy estimates} \label{sect:energy}

To arrive at an energy estimate for an eddy current problem, 
Ammari, Chen, Chen, Garnier and Volkov~\cite{Ammari2014} have introduced the following 
\begin{align}
\vF(\vx) := &  \frac{1}{2} (\nabla_z \times \vEb(\vz))\times(\vx-\vz) + \frac{1}{3} \vD_z (\nabla_z \times \vEb(\vz))(\vx-\vz) \times(\vx-\vz)  \nonumber \\
= &   \frac{\im k }{2} \sum_{i=1}^3  (\vHb(\vz))_i \ve_i  \times (\vx-\vz) + \frac{\im k}{3} \sum_{i,j=1}^3 \vD_z (\vHb(\vz) )_{ij} (\vx-\vz)_j \ve_i \times(\vx-\vz) ,\label{eqn:oldf}
\end{align}
which has the curl
\begin{align}
\nabla_x \times \vF  = &  \nabla_z \times \vEb(\vz) + \vD_z (\nabla_z \times \vEb(\vz))(\vx-\vz) \nonumber \\
= & \im k \vHb(\vz)  +\im k \sum_{i,j=1}^3 \vD_z (\vHb(\vz))_{ij} (\vx -\vz )_j \ve_i . \label{old:curlf}
\end{align}
In the above, $\nabla_x \times \vF$ corresponds to the first two terms in a Taylor's series expansion of $\im k \vHb(\vx)$ about $\vz$ as $|\vx-\vz| \to 0$. We adopt a different form for $\vF$ as follows
\begin{align}
\vF(\vx) := &   \frac{1}{2} (\nabla_z \times \vEb(\vz))\times(\vx-\vz) + \frac{1}{3} \vD_z (\nabla_z \times \vEb(\vz))(\vx-\vz) \times(\vx-\vz)  + \vEb(\vz)  \nonumber \\
= &   \frac{\im k }{2} \sum_{i=1}^3  (\vHb(\vz))_i \ve_i  \times (\vx-\vz) + \frac{\im k}{3} \sum_{i,j=1}^3 \vD_z (\vHb(\vz) )_{ij} (\vx-\vz)_j \ve_i \times(\vx-\vz)  + \vEb(\vz),\label{eqn:newf}
\end{align}
so that
\begin{align}
\nabla_x \times \vF  =   \im k  \vHb(\vz)  + \im k \sum_{i,j=1}^3 \vD_z (\vHb(\vz))_{ij} (\vx -\vz )_j \ve_i  , \label{eqn:newcurlf}
\end{align}
is still the first two terms in a Taylor's series expansion of $\im k \vHb(\vx)$ about $\vz$ as $|\vx-\vz| \to 0$.
Note that $\vF(\vz) = \vE_0(\vz)$ and $(\nabla_x \times  \vF)(\vz) = \im k \vHb(\vz)$. We can also determine
\begin{align}
\nabla_x \cdot \vF  = & -   \frac{\im k}{3} \sum_{i=1}^3 (  \nabla_z \times \vHb(\vz))_{i}\ve_i \cdot (\vx -\vz ) \nonumber \\
=&    \frac{ k^2}{3} \sum_{i=1}^3 ( \vEb(\vz))_{i}\ve_i \cdot (\vx -\vz ) \nonumber \\
= &   \nabla^2 \phi_0,
\end{align}
where, up to a constant, 
\begin{align}
 \phi_0 = \frac{k^2}{18}  \left ( (\vEb(\vz))_1 (x_1-z_1)^3 +  (\vEb(\vz))_2 (x_2-z_2)^3 +  (\vEb(\vz))_3 (x_3-z_3)^3 \right ).\label{eqn:phi}
\end{align}

Related to (3.7) in~\cite{Ammari2014}, we introduce $\vw$ as the  {unique} solution to the weak problem: Find $\vw \in \vX$ s.t.
\begin{align}
\int_{B_\alpha^c} \left (  \nabla \times  \vw \cdot \nabla \times \overline{\vv} - k^2 \vw\cdot \overline{\vv} \right ) \dif \vx + &  \int_{B_\alpha} \left (  \mu_r^{-1} \nabla \times  \vw \cdot \nabla \times \overline{ \vv} -k^2 \epsilon_r  \vw \cdot \overline{\vv}  \right ) \dif \vx \nonumber\\
= & \int_{B_\alpha} \left ( (1-  \mu_r^{-1}) \nabla \times \vF \cdot \nabla \times \overline{\vv} -k^2 (1-  \epsilon_r)  \vF \cdot \overline{\vv}  \right ) \dif \vx \label{eqn:weakw} ,
\end{align}
for all $\vv \in \vX$.

The following updated form of Lemma 3.2 from~\cite{Ammari2014}  relies on a Friedrichs' type inequality that will allow us to estimate a divergence free field $\vu\in H(\text{curl})\cap H(\text{div})$ in a bounded Lipschitz domain $B_\alpha$ in terms of its curl and $\vn \cdot \vu$ on $\partial B_\alpha$ e.g.~\cite{monk}[Corollary 3.52, pg 72]
\begin{align}
\| \vu \|_{L^2(B_\alpha)} \le C  \left ( \| \nabla \times \vu \|_{L^2(B_\alpha)} + \| \vn \cdot \vu \|_{L^2(\Gamma_\alpha )} \right ), \label{eqn:fried}
\end{align}
which we assume holds with $\alpha=1$ while,  {in general},
\begin{align}
\| \vu \|_{L^2(B_\alpha)} \le C \alpha \left ( \| \nabla \times \vu \|_{L^2(B_\alpha )} + \| \vn \cdot \vu \|_{L^2(\Gamma_\alpha )} \right ), \label{eqn:frieddim}
\end{align}
leading to:
\begin{lemma} \label{lemma:energy}
Let $\vw$ be defined by (\ref{eqn:weakw}). For $|\epsilon_r|$ such that 
$ \frac{1}{|\epsilon_r|} \| \vn \cdot ( \vEd|_+- \vw|_+)\|_{L^2(  \Gamma_\alpha) } \le 
  \| \nabla \times ( \vEa - \vEb-   \vw)   \|_{L^2(B_\alpha)}$,
   there exists a constant $C$ such that
\begin{align}
\| \nabla \times ( \vEa - \vEb - \vw ) \|_{L^2(B_\alpha)}  \le &  C \alpha^{7/2} \mu_r  \left (|1 - \mu_r^{-1}| + | \nu | \right ) \| \nabla \times \vEb\|_{W^{2,\infty}(B_\alpha)} \nonumber , \\
\left \| \vEa - \vEb - \vw - \frac{\epsilon_r-1}{\epsilon_r} \nabla \phi_0  \right  \|_{L^2(B_\alpha)}  \le &  C \alpha^{9/2} \mu_r  \left (|1 - \mu_r^{-1}| + | \nu | \right ) \| \nabla \times \vEb\|_{W^{2,\infty}(B_\alpha)} \nonumber . 
\end{align}
\end{lemma}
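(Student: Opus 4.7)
The plan is to set $\vu := \vEd - \vw = \vEa - \vEb - \vw$ and subtract the two weak formulations defining $\vEd$ and $\vw$. Since their left-hand sides coincide, $\vu \in \vX$ satisfies the same weak problem as $\vEd$ but with $\vEb$ replaced by $\vEb-\vF$ on the right-hand side. By construction of $\vF$ in (\ref{eqn:newf})--(\ref{eqn:newcurlf}), $\vF(\vz)=\vEb(\vz)$ and $\nabla_x\times\vF$ equals the first two terms of the Taylor expansion of $\nabla\times\vEb$ about $\vz$, so standard remainder bounds give $\|\nabla\times(\vEb-\vF)\|_{L^2(B_\alpha)}\le C\alpha^{7/2}\|\nabla\times\vEb\|_{W^{2,\infty}(B_\alpha)}$. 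Using the Maxwell identity $k^2\vEb=\nabla\times\nabla\times\vEb$ inside $B_\alpha$ (where $\vJ$ vanishes), one similarly obtains $k^2\|\vEb-\vF\|_{L^2(B_\alpha)}\le C\alpha^{5/2}\|\nabla\times\vEb\|_{W^{2,\infty}(B_\alpha)}$, and rewriting $k^2(1-\epsilon_r)=-\nu/\alpha^2$ exposes the $|\nu|$ factor in the final constant.

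Next I would test the weak problem with $\vv=\vu$. The Silver--M\"uller radiation condition contributes, in the limit on a large enclosing sphere, a positive-imaginary boundary term, while the interior integrand produces the dissipative quantity $-k^2\epsilon_r|\vu|^2$ with $\operatorname{Im}\epsilon_r\ge 0$. A Cauchy--Schwarz bound on the right-hand side, combined with the Friedrichs-type inequality (\ref{eqn:frieddim}) used to absorb $\|\vu\|_{L^2(B_\alpha)}$ into $\|\nabla\times\vu\|_{L^2(B_\alpha)}$ (this is precisely where the hypothesis involving $\|\vn\cdot(\vEd|_+-\vw|_+)\|_{L^2(\Gamma_\alpha)}$ is invoked), then delivers the first estimate, with the prefactor $\mu_r(|1-\mu_r^{-1}|+|\nu|)$ produced by explicit tracking of all the coefficients.

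For the $L^2$ bound, observe that $\vEd$ is divergence free in $B_\alpha$ since $\nabla\cdot(\tilde{\epsilon}_r\vEd)=0$ with $\epsilon_r$ constant inside, while taking the divergence of the strong form defining $\vw$ yields $\nabla\cdot\vw=\tfrac{1-\epsilon_r}{\epsilon_r}\nabla\cdot\vF=\tfrac{1-\epsilon_r}{\epsilon_r}\nabla^2\phi_0$. Consequently $\vu-\tfrac{\epsilon_r-1}{\epsilon_r}\nabla\phi_0$ is divergence free in $B_\alpha$ and its curl coincides with $\nabla\times\vu$, so (\ref{eqn:frieddim}) applied to it, combined with the curl estimate just derived, the hypothesised control of the normal trace, and the elementary bound $\|\vn\cdot\nabla\phi_0\|_{L^2(\Gamma_\alpha)}=O(\alpha^{3})$ (since $\nabla\phi_0=O(\alpha^2)$ on a surface of area $O(\alpha^2)$), produces the $\alpha^{9/2}$ estimate.

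The main obstacle is the energy step: the underlying sesquilinear form is not coercive because of the $-k^2$ volume terms, so one cannot simply extract $\|\nabla\times\vu\|_{L^2(B_\alpha)}^2$ from its real part. The remedy is to combine the dissipation supplied by $\operatorname{Im}\epsilon_r\ge 0$ with the outgoing-wave damping of the Silver--M\"uller condition on a large enclosing sphere to absorb the $\|\vu\|_{L^2}^2$ contributions; the most delicate bookkeeping is arranging that the final constant depends only on $|1-\mu_r^{-1}|$ and $|\nu|$, rather than on $|1-\epsilon_r|$, which is large precisely in the regime of interest.
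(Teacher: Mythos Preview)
Your overall architecture is right, but the decisive step---getting the factor $|\nu|$ rather than $|1-\epsilon_r|$ in front of $\alpha^{7/2}$---does not go through as written. You try to bound the source term
\[
k^{2}(1-\epsilon_r)\int_{B_\alpha}(\vEb-\vF)\cdot\overline{\vv}\,\dif\vx
\]
by combining the Maxwell identity $k^{2}\vEb=\nabla\times\nabla\times\vEb$ with the rewriting $k^{2}(1-\epsilon_r)=-\nu/\alpha^{2}$. But there is only one factor of $k^{2}$ available: if you spend it on $k^{2}\|\vEb-\vF\|_{L^{2}}\le C\alpha^{5/2}\|\nabla\times\vEb\|_{W^{2,\infty}}$, you are left with the raw $|1-\epsilon_r|$, which is large in the regime of interest; if you spend it on $k^{2}(1-\epsilon_r)=-\nu/\alpha^{2}$, you must bound $\|\vEb-\vF\|_{L^{2}(B_\alpha)}$ directly, and this is only $O(\alpha^{5/2})$ in terms of $\|\vEb\|_{W^{1,\infty}}$ (the symmetric part of $\vD\vEb(\vz)$ survives at first order and is not controlled by $\|\nabla\times\vEb\|_{W^{2,\infty}}$). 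Either way the final constant is too big by a factor $|1-\epsilon_r|/|\nu|=(\alpha k)^{-2}$.

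The paper's remedy is to bring $\nabla\phi_0$ into the energy identity itself, not only into the $L^{2}$ estimate afterwards. One rewrites the weak identity so that the zero-order source becomes $k^{2}(1-\epsilon_r)(\vEb+\nabla\phi_0-\vF)$ and simultaneously tests with
\[
\vv=\begin{cases}\vEa-\vEb-\vw-\dfrac{\epsilon_r-1}{\epsilon_r}\nabla\phi_0 & \text{in }B_\alpha,\\[4pt]
\vEa-\vEb-\vw & \text{in }B_\alpha^{c}.\end{cases}
\]
By construction $\vEb+\nabla\phi_0-\vF$ is divergence free in $B_\alpha$ with vanishing normal trace on $\Gamma_\alpha$, so the Friedrichs inequality (\ref{eqn:frieddim}) applies to it and yields $\|\vEb+\nabla\phi_0-\vF\|_{L^{2}(B_\alpha)}\le C\alpha\,\|\nabla\times(\vEb-\vF)\|_{L^{2}(B_\alpha)}\le C\alpha^{9/2}\|\nabla\times\vEb\|_{W^{2,\infty}}$. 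The same gradient correction makes $\vv$ divergence free in $B_\alpha$, so Friedrichs (together with your stated hypothesis on the normal trace) gives the second factor of $\alpha$. Those two extra $\alpha$'s are exactly what convert $k^{2}|1-\epsilon_r|=|\nu|/\alpha^{2}$ into $|\nu|$ with the claimed $\alpha^{7/2}$. Without this correction in the energy step the argument does not close.
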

\begin{proof}
For the eddy current model considered by~\cite{Ammari2014},  $\nabla \cdot \vEa = \nabla \cdot \vEb =0$ in $B_\alpha$ and $\vn \cdot \vEa|_-=0$ on $\Gamma_\alpha$. Also,  {they have} $\nabla\cdot \vw  = - \nabla \cdot \vF $ in $B_\alpha$ and $\vn \cdot \vw|_- = - \vn \cdot \vF $ on $\Gamma_\alpha$. Our situation is different, however,
so that choosing $\vv= \nabla \vartheta$ for some appropriate $\vartheta$ in (\ref{eqn:weakw}) we find that
\begin{align}
\nabla \cdot (\epsilon_r \vw ) = \nabla \cdot ((1-\epsilon_r) \vF) \qquad \text{in $B_\alpha$}, \qquad \vn \cdot \vw|_+ - \vn \cdot \epsilon_r \vw|_- = -   (1-\epsilon_r) \vn \cdot \vF \qquad \text{on $\Gamma_\alpha$} \nonumber.
\end{align}
We also know that for the problem we are considering
\begin{align}
\nabla \cdot (\epsilon_r \vEa ) = 0 \qquad \text{in $B_\alpha$}, \qquad \vn \cdot \vEa |_+ - \vn \cdot \epsilon_r \vEa |_- = 0 \qquad \text{on $\Gamma_\alpha$}  \nonumber.
\end{align}
We follow~\cite{Ammari2014}, and introduce $\phi_0$ in a similar way such that $\nabla \cdot  ( \vEb + \nabla \phi_0) = \nabla \cdot \vF$ in $B_\alpha$ with $\vn \cdot (\vEb +\nabla \phi_0) |_- = \vn \cdot \vF$ on $\Gamma_\alpha$, which is justified based on  (\ref{eqn:phi}) and knowing that $\nabla \cdot \vEb=0$ in $B_\alpha$.  Since $\nabla \cdot ( \vF - \nabla \phi_0) =0$ in $B_\alpha$ then $\nabla \cdot ((1-\epsilon_r ) (\vF - \nabla \phi_0)) = \nabla \cdot (\epsilon_r \vw - (1-\epsilon_r) \nabla \phi_0) =0$ and $\nabla \cdot ( \vw -\frac{1-\epsilon_r}{\epsilon_r} \nabla \phi_0) =0$ in $B_\alpha$. Then,
\begin{align}
\nabla \cdot \left ( \vEa - \vEb - \vw - \frac{\epsilon_r-1}{\epsilon_r} \nabla \phi_0 \right ) =0 \qquad \text{in $B_\alpha$},
\end{align}
and, by combining the above results, we also have
\begin{align}
\vn \cdot \left  ( \vEa|_- -\vw|_- -\vEb - \frac{\epsilon_r-1}{\epsilon_r}\nabla \phi_0  \right ) -\frac{1}{\epsilon_r}  \vn \cdot \left ( \vEa|_+ - \vw|_+ - \vEb  \right ) =0 \qquad \text{on $\Gamma_\alpha$}.
\end{align}
  {For  $|\epsilon_r| \to \infty$,  then $ \frac{1}{\epsilon_r}  \vn \cdot \left ( \vEa|_+ - \vw|_+ - \vEb  \right )=  \frac{1}{\epsilon_r}  \vn \cdot \left ( \vEd|_+ - \vw|_+   \right ) \to 0$ on $\Gamma_\alpha$ and, the presence of this term,} means that  rather than \begin{align}
\left \| \vEa - \vEb-\vw -\frac{\epsilon_r-1}{\epsilon_r }\nabla \phi_0  \right \|_{L^2(B_\alpha)} \le C \alpha \| \nabla \times ( \vEa - \vEb-   \vw)   \|_{L^2(\Gamma_\alpha)},  \label{eqn:eqnlemmacon4old}
\end{align}
we should use (\ref{eqn:fried}) leading to an estimate of the form
\begin{align}
\left \| \vEa - \vEb-\vw -\frac{\epsilon_r-1}{\epsilon_r }\nabla \phi_0  \right \|_{L^2(B_\alpha)} \le C \alpha \left (  \| \nabla \times ( \vEa - \vEb-   \vw)   \|_{L^2(B_\alpha)}+ \frac{1}{|\epsilon_r|} \| \vn \cdot ( \vEd|_+- \vw|_+)\|_{L^2(  \Gamma_\alpha) }  \right )  .  \label{eqn:eqnlemmacon4}
\end{align}
  
Next, we construct,
\begin{align}
\int_{B_\alpha^c} & \left (  \nabla \times (\vEa - \vEb  - \vw  ) \cdot \nabla \times \overline{\vv} - k^2 (\vEa - \vEb  - \vw  )\cdot \overline{\vv} \right ) \dif \vx \nonumber\\
&+ \int_{B_\alpha} \left (  \mu_r^{-1} \nabla \times (\vEa -   \vEb  -  \vw  ) \cdot \nabla \times \overline{\vv} - k^2 \epsilon_r \left  (\vEa - \vEb  -\vw-\frac{\epsilon_r-1}{\epsilon_r} \nabla \phi_0 \right  )\cdot \overline{\vv} \right ) \dif \vx \nonumber\\
& = \int_{B_\alpha}  \left ( (1-\mu_r^{-1})\nabla \times (\vEb-  \vF) \cdot \nabla \times \overline{\vv} - k^2  (1-\epsilon_r) (  \vEb+  \nabla \phi_0  -\vF)  ) \cdot \overline{\vv} \right ) \dif \vx. \label{eqn:eqnlemmacon1a}
\end{align}
By noting the  curl of the gradient of a scalar vanishes we can write
\begin{align}
\int_{B_\alpha^c} & \left (  \nabla \times (\vEa - \vEb  - \vw ) \cdot \nabla \times \overline{\vv} - k^2 (\vEa - \vEb  - \vw  )\cdot \overline{\vv} \right ) \dif \vx \nonumber\\
&+ \int_{B_\alpha} \left (  \mu_r^{-1} \nabla \times \left (\vEa -   \vEb  -  \vw - \frac{\epsilon_r-1}{\epsilon_r} \nabla \phi_0 \right  ) \cdot \nabla \times \overline{\vv} - k^2 \epsilon_r \left (\vEa - \vEb  -\vw-\frac{\epsilon_r-1}{\epsilon_r} \nabla \phi_0 \right  )\cdot \overline{\vv} \right ) \dif \vx \nonumber\\
& = \int_{B_\alpha}  \left ( (1-\mu_r^{-1})\nabla \times (\vEb-  \vF) \cdot \nabla \times \overline{\vv} - k^2  (1-\epsilon_r) (  \vEb+  \nabla \phi_0  -\vF)  ) \cdot \overline{\vv} \right ) \dif \vx. \label{eqn:eqnlemmacon1}
\end{align}
Choosing $\vv = \left \{ \begin{array}{lc} \vEa -   \vEb  -  \vw - \frac{\epsilon_r-1}{\epsilon_r} \nabla \phi_0 & \text{in $B_\alpha$} \\
\vEa -   \vEb  -  \vw & \text{in $B_\alpha^c $} \end{array} \right .$
we have 
\begin{align}
& \left  | \int_{B_\alpha}  \mu_r^{-1}  \left   |  \nabla \times (\vEa - \vEb -\vw-\frac{\epsilon_r-1}{\epsilon_r} \nabla \phi_0 \right   |^2 \dif \vx \right |  \nonumber\\
&\qquad  \le \left | \int_{B_\alpha^c}  | \nabla \times (\vEa - \vEb -\vw  ) |^2 - k^2 | \vEa - \vEb -\vw  |^2  \dif \vx\right . \nonumber\\
&\qquad \qquad \left .+ \int_{B_\alpha} \left (  \mu_r^{-1} \left   | \nabla \times (\vEa - \vEb -\vw-\frac{\epsilon_r-1}{\epsilon_r} \nabla \phi_0   \right |^2  - k^2 \epsilon_r \left | \vEa - \vEb -\vw-\frac{\epsilon_r-1}{\epsilon_r} \nabla \phi_0   \right |^2  \right ) \dif \vx \right | \nonumber\\
& \qquad = \left | \int_{B_\alpha}  \left ( (1-\mu_r^{-1})\nabla \times (\vEb-\vF) \cdot \nabla \times \overline{\vv} - k^2 (1-\epsilon_r) (\vEb+\nabla \phi_0  -\vF  ) \cdot \overline{\vv} \right ) \dif \vx \right | \label{eqn:eqnlemmacon1n}.
\end{align}
Also,
\begin{align}
\left |  \int_{B_\alpha} (1-\mu_r^{-1}) \left ( \nabla \times (\vEb-\vF) \cdot \nabla \times \overline{\vv} \right ) \dif \vx \right | \le
C | 1-\mu_r^{-1} | \alpha^{7/2}\| \nabla \times \vEb \|_{W^{2,\infty}(B_\alpha)} \| \nabla \times \vv \|_{L^2(B_\alpha)} , \label{eqn:eqnlemmacon2}
\end{align}
and
\begin{align}
\left | k^2  \int_{B_\alpha} (1-\epsilon_r) (\vEb+\nabla \phi_0 -\vF  )\cdot \overline{\vv}  \dif \vx \right | \le  & k^2 |1-\epsilon_r | \| \vEb+ \nabla \phi_0 -\vF   \|_{L^2(B_\alpha)} \| \vv \|_{L^2(B_\alpha)} \nonumber \\
\le & C\alpha^2 k^2 |1-\epsilon_r | \| \nabla \times ( \vEb-\vF ) \|_{L^2(B_\alpha)} ( \| \nabla \times \vv \|_{L^2(B_\alpha)} +\| \vn \cdot \vv \|_{L^2(\Gamma_\alpha)}  ) \nonumber \\
\le & C  \alpha^{7/2} | \nu |   \| \nabla \times  \vEb  \|_{W^{2,\infty} (B_\alpha)}(  \| \nabla \times \vv \|_{L^2(B_\alpha)} +\| \vn \cdot \vv \|_{L^2(\Gamma_\alpha)})  , \label{eqn:eqnlemmacon3}
\end{align}
which follows since $\vEb+\nabla \phi_0-\vF  $ is divergence free in $B_\alpha$ and has vanishing  {$\vn \cdot( \vEb+\nabla \phi_0-\vF)$} on $\Gamma_\alpha$ and so
\begin{align}
\| \vEb+\nabla \phi_0 -\vF   \|_{L^2(B_\alpha)} \le C\alpha \| \nabla \times ( \vEb-\vF ) \|_{L^2(B_\alpha)} \le & C\alpha \alpha^{3/2} \alpha^2 \| \nabla \times  \vEb  \|_{W^{2,\infty}(B_\alpha)}\nonumber \\
 =&  C\alpha^{9/2}  \| \nabla \times  \vEb  \|_{W^{2,\infty}(B_\alpha)} \label{eqn:ineqebf}.
\end{align}
Also, since $\vv$ is divergence free, $\| \vv \|_{L^2(B_\alpha)} \le C \alpha \left (  \| \nabla \times \vv \|_{L^2(B_\alpha)} + \| \vn \cdot \vv \|_{L^2(\Gamma_\alpha)} \right )  $.
Using (\ref{eqn:eqnlemmacon2}) and (\ref{eqn:eqnlemmacon3})  in (\ref{eqn:eqnlemmacon1n}) then 
\begin{align}
& \mu_r^{-1} \left \| \nabla \times \left (  \vEa - \vEb-\frac{\epsilon_r-1}{\epsilon_r} \nabla \phi_0 -\vw \right ) \right  \|_{L^2(B_\alpha)}^2 \le
C  | 1-\mu_r^{-1} | \alpha^{7/2}\| \nabla \times \vEb \|_{W^{2,\infty}(B_\alpha)} \| \nabla \times \vv \|_{L^2(B_\alpha)}  \nonumber\\
&\qquad \qquad \qquad \qquad \qquad \qquad \qquad \qquad +
C \alpha^{7/2} | \nu |   \| \nabla \times  \vEb  \|_{W^{2,\infty} (B_\alpha)} (\| \nabla \times \vv \|_{L^2(B_\alpha)}+\| \vn \cdot \vv \|_{L^2(\Gamma_\alpha)}) ,
\end{align}
and for  {$|\epsilon_r|$  such that 
$\| \vn \cdot \vv \|_{L^2(\Gamma_\alpha)} = \frac{1}{|\epsilon_r|} \| \vn \cdot ( \vEd|_+- \vw|_+)\|_{L^2(  \Gamma_\alpha) } \le 
  \| \nabla \times ( \vEa - \vEb-   \vw)   \|_{L^2(B_\alpha)}$,} then
\begin{align}
 \| \nabla \times (  \vEa - \vEb -\vw )\|_{L^2(B_\alpha)} \le&
C \mu_r \alpha^{7/2}\left (  | 1-\mu_r^{-1} | + |  \nu | \right ) \| \nabla \times \vEb \|_{W^{2,\infty}(B_\alpha)} .
\end{align}
By additionally combining this with (\ref{eqn:eqnlemmacon4}) completes the proof.

\end{proof}

Unlike the eddy current problem considered in~\cite{Ammari2014}, we are not guaranteed to have $\nabla_x \times \vHb(\vx)=\vzero$ in $B_\alpha$ and so an integration by parts yields
\begin{align}
\int_{B_\alpha} (1-\mu_r^{-1}) \nabla \times \vF\cdot \nabla \times \overline{\vv}  \dif \vx = & \int_{\Gamma_\alpha} [ \tilde{\mu}_r^{-1} \nabla \times \vF \times \vn^-]_{\Gamma_\alpha} \cdot \overline{\vv} \dif \vx + \int_{B_{\alpha}} (1-\mu_r^{-1}) \overline{\vv} \cdot \nabla \times \nabla \times \vF \dif \vx  \nonumber \\
\end{align}
and, hence, the weak problem for $\vw$ becomes:
Find $\vw \in \vX$ s.t.
\begin{align}
\int_{B_\alpha^c} \left (  \nabla \times  \vw \cdot \nabla \times \overline{\vv} - k^2 \vw\cdot \overline{\vv} \right ) \dif \vx + &  \int_{B_\alpha} \left (  \mu_r^{-1} \nabla \times  \vw \cdot \nabla \times \overline{ \vv} -k^2 \epsilon_r  \vw \cdot \overline{\vv}  \right ) \dif \vx \nonumber\\
= &\int_{\Gamma_\alpha} [ \tilde{\mu}_r^{-1} \nabla \times \vF \times \vn^-]_{\Gamma_\alpha} \cdot \overline{\vv} \dif \vx -k^2\int_{B_\alpha} \left ( (1-  \epsilon_r)  \vF \cdot \overline{\vv}  \right ) \dif \vx \nonumber \\
 & + \int_{B_{\alpha}} (1-\mu_r^{-1}) \overline{\vv} \cdot \nabla \times \nabla \times \vF \dif \vx , \label{eqn:weakw2}
\end{align}
for all $\vv \in \vX$. This, in turn, motivates the strong form for $\vw$ as
\begin{subequations}
\begin{align}
\nabla \times \mu_r^{-1} \nabla \times \vw -k^2 \epsilon_r \vw =  &   (1-\mu_r^{-1}) \nabla \times \nabla \times \vF -k^2(1-\epsilon_r) \vF && \text{in $B_\alpha$},\\
\nabla \times \nabla \times \vw -k^2  \vw = & \vzero && \text{in $B_\alpha^c$},\\
\nabla\cdot \vw  = & 0 && \text{in ${\mathbb R}^3$},\\
[\vn \times \vw]_{\Gamma_\alpha}=\vzero, \qquad [\vn \times \tilde{\mu_r}^{-1} \nabla \times \vw]_{\Gamma_\alpha} =& -(1-\mu_r^{-1})\vn \times \nabla \times \vF && \text{on $\Gamma_\alpha$},\\
\lim_{x\to \infty} x \left ( ( \nabla \times \vw) \times \hat{\vx}- \im k  \vw \right ) = &  \vzero.
\end{align}
\end{subequations}
By introducing $\vw (\vx) = \alpha \vwz\left ( \frac{\vx-\vz}{\alpha} \right ) = \alpha \vwz(\vxi) $, we see $\vw_0(\vxi)$ satisfies
\begin{subequations}
\begin{align}
\nabla_\xi \times \mu_r^{-1} \nabla_\xi \times \vwz -k^2 \epsilon_r\alpha^2  \vwz =  &  (1-\mu_r^{-1}) \nabla \times \nabla \times [\alpha \vF]  -k^2\alpha^2 (1-\epsilon_r) [\alpha^{-1} \vF] && \text{in $B$},\\
\nabla_\xi \times \nabla_\xi  \times \vwz -k^2 \alpha^2 \vwz = & \vzero && \text{in $B^c$},\\
\nabla_\xi\cdot \vwz  = & 0 && \text{in ${\mathbb R}^3$},\\
[\vn \times \vwz]_\Gamma=\vzero, \qquad [\vn \times \tilde{\mu}_r^{-1} \nabla_\xi \times \vwz]_\Gamma =& -(1-\mu_r^{-1})\vn \times \nabla \times [\alpha^{-1} \vF] && \text{on $\Gamma$},\\
\lim_{\xi \to \infty}  \xi \left (  ( \nabla_\xi \times \vwz) \times  \hat{\vxi} - \im k  \alpha \vwz \right ) = &  \vzero.
\end{align}\label{eqn:transw0}
\end{subequations}

Hence, Theorem 3.1 in~\cite{Ammari2014}, which follows directly from Lemma~\ref{lemma:energy}, is replaced by
\begin{theorem} \label{thm:energyestf}
 {For $|\epsilon_r|$ such that 
$ \frac{1}{|\epsilon_r|} \| \vn \cdot ( \vEd|_+- \vw|_+)\|_{L^2(  \Gamma_\alpha) } \le 
  \| \nabla \times ( \vEa - \vEb-   \vw)   \|_{L^2(B_\alpha)}$,} there exists a constant $C$ such that
\begin{align}
\left \|  \nabla \times ( \vEa- \vEb - \alpha \vwz \left ( \frac{\vx-\vz}{\alpha}  \right ) \right \|_{L^2(B_\alpha)}  \le &  C \alpha^{7/2} \mu_r  \left (|1 - \mu_r^{-1}| +  | \nu | \right ) \| \nabla \times \vEb\|_{W^{2,\infty}(B_\alpha)}, \nonumber \\
\left \| \vEa - \vEb    - \alpha \vwz \left ( \frac{\vx-\vz}{\alpha}  \right )  - \frac{\epsilon_r-1}{\epsilon_r} \nabla \phi_0 \right
\|_{L^2(B_\alpha)}  \le &  C \alpha^{9/2} \mu_r  \left (|1 - \mu_r^{-1}| + | \nu | \right ) \| \nabla \times \vEb\|_{W^{2,\infty}(B_\alpha)}. \nonumber 
\end{align}
\end{theorem}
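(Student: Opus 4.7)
The plan is that Theorem~\ref{thm:energyestf} is an immediate consequence of Lemma~\ref{lemma:energy} together with the rescaling
\[
\vw(\vx) = \alpha\,\vwz\!\left(\frac{\vx-\vz}{\alpha}\right), \qquad \vxi = \frac{\vx-\vz}{\alpha},
\]
introduced just before (\ref{eqn:transw0}). The proof will therefore be a two-step verification: first that the $\vw$ appearing in Lemma~\ref{lemma:energy} and the $\vw$ defined via (\ref{eqn:transw0}) through $\vwz$ are the same object, and second that substituting this scaled form into the two bounds of Lemma~\ref{lemma:energy} reproduces verbatim the two bounds claimed in Theorem~\ref{thm:energyestf}.

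First, I would confirm that the variational problem (\ref{eqn:weakw}) is equivalent to its reformulation (\ref{eqn:weakw2}), the passage being the integration by parts already performed in the paper and valid since the terms involved lie in $L^2$. The strong form deduced from (\ref{eqn:weakw2}) is precisely the system for $\vw$ shown immediately before (\ref{eqn:transw0}). A routine change of variables $\vxi = (\vx-\vz)/\alpha$ then yields (\ref{eqn:transw0}): under this substitution $\nabla_x \mapsto \alpha^{-1}\nabla_\xi$, so each curl contributes a factor of $\alpha^{-1}$, $k^2$ is multiplied by $\alpha^2$ on rescaled variables, and the source and jump data rescale to $(1-\mu_r^{-1})\nabla\times\nabla\times[\alpha\vF]$ and $(1-\mu_r^{-1})\vn\times\nabla\times[\alpha^{-1}\vF]$, matching (\ref{eqn:transw0}) exactly. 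This identifies the $\vw$ of Lemma~\ref{lemma:energy} with $\alpha\,\vwz((\vx-\vz)/\alpha)$.

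Second, I would substitute this identification into the two estimates proved in Lemma~\ref{lemma:energy}. Since both left-hand sides are norms of the differences $\vEa-\vEb-\vw$ and $\vEa-\vEb-\vw-\frac{\epsilon_r-1}{\epsilon_r}\nabla\phi_0$ and the curl of the first, and since these expressions are pointwise unchanged by the substitution, the two bounds in Theorem~\ref{thm:energyestf} follow immediately with the same constant $C$. The hypothesis on $|\epsilon_r|$ is the same in both statements and so transfers without alteration.

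There is no substantive obstacle here: the theorem is a rephrasing of Lemma~\ref{lemma:energy} that makes the dilation-invariant structure explicit by writing the corrector as $\alpha\,\vwz((\vx-\vz)/\alpha)$. The virtue of the reformulation is that $\vwz$ is defined on the fixed reference domain $B$, which renders the dependence on $\alpha$ transparent and is the form needed for the integral-representation analysis in Section~\ref{sect:integral} that ultimately produces the polarizability-tensor coefficients ${\mathcal A}, {\mathcal B}, {\mathcal C}, {\mathcal N}$ of Theorem~\ref{thm:main}.
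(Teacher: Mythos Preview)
Your proposal is correct and matches the paper's own treatment: the paper states explicitly that Theorem~\ref{thm:energyestf} ``follows directly from Lemma~\ref{lemma:energy}'' after introducing the rescaling $\vw(\vx)=\alpha\,\vwz((\vx-\vz)/\alpha)$, and your two-step verification (identifying the $\vw$ of Lemma~\ref{lemma:energy} with $\alpha\vwz$ via (\ref{eqn:transw0}), then substituting into the bounds) is exactly this.
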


From (\ref{eqn:newf}),  we have
\begin{align}
\alpha^{-1} \vF ( \alpha \xi + \vz ) = & 
\frac{\im k }{2}\sum_{i=1}^3  ( \vHb(\vz))_i \ve_i \times \vxi + \frac{\im k \alpha }{3} \sum_{i,j=1}^3 
(\vD_z \vHb(\vz))_{ij}  \xi_j \ve_i \times \vxi  + \alpha^{-1} \sum_{i=1}^3 (\vEb(\vz))_i \ve_i
 , \nonumber
\end{align} 
and similarly
\begin{align}
 \vwz ( \vxi  ) =   \frac{\im k}{2} \sum_{i=1}^3 (\vHb(\vz))_i \vth_i(\vxi)  + \frac{\im k \alpha}{3}\sum_{i=1,j}^3
(\vD_z \vHb(\vz))_{ij}   \vpsi_{ij} (\vxi)  + \alpha^{-1} \sum_{i=1}^3 (\vEb(\vz))_i \vphi_i (\vxi)
 . \label{eqn:neww}
\end{align} 
In the above, $\vth_i(\vxi)$ solves
\begin{subequations} \label{eqn:transth}
\begin{align}
\nabla_\xi \times \mu_r^{-1} \nabla_\xi \times \vth_i -k^2 \alpha^2 \epsilon_r \vth_i=  &  -k^2\alpha^2 (1-\epsilon_r)\ve_i\times \vxi && \text{in $B$},\\
\nabla_\xi \times \nabla_\xi  \times\vth_i -k^2\alpha^2  \vth_i = & \vzero && \text{in $B^c$},\\
\nabla_\xi\cdot \vth_i   = & 0 && \text{in ${\mathbb R}^3$},\\
[\vn \times\vth_i ]_\Gamma=\vzero, \qquad [\vn \times \tilde{\mu}_r^{-1} \nabla_\xi \times\vth_i ]_\Gamma =& -2 (1-\mu_r^{-1})\vn \times \ve_i && \text{on $\Gamma$},\\
\lim_{\xi \to \infty} \xi \left ( ( \nabla_\xi \times \vth_i ) \times \hat{\vxi} - \im k  \alpha\vth_i \right ) = &  \vzero,
\end{align}
\end{subequations}
$\vpsi_{ij} (\vxi)$ solves
\begin{subequations} \label{eqn:transpsi}
\begin{align}
\nabla_\xi \times \mu_r^{-1} \nabla_\xi \times \vpsi_{ij} -k^2 \alpha^2 \epsilon_r \vpsi_{ij}=  & (1-\mu_r^{-1}) \ve_j \times \ve_i -k^2\alpha^2 (1-\epsilon_r) \xi_j \ve_i\times \vxi && \text{in $B$},\\
\nabla_\xi \times \nabla_\xi  \times\vpsi_{ij} -k^2\alpha^2  \vpsi_{ij} = & \vzero && \text{in $B^c$},\\
\nabla_\xi\cdot \vpsi_{ij}   = & 0 && \text{in ${\mathbb R}^3$},\\
[\vn \times\vpsi_{ij} ]_\Gamma=\vzero, \qquad [\vn \times \tilde{\mu}_r^{-1} \nabla_\xi \times\vth_i ]_\Gamma =& -3 (1-\mu_r^{-1})\vn \times \xi_j \ve_i && \text{on $\Gamma$},\\
\lim_{\xi \to \infty} \xi \left ( ( \nabla_\xi \times \vpsi_{ij} ) \times \hat{\vxi} - \im k  \alpha\vpsi_{ij} \right ) = &  \vzero,
\end{align}
\end{subequations}
and $\vphi_{i} (\vxi)$ solves
\begin{subequations} \label{eqn:transph}
\begin{align}
\nabla_\xi \times \mu_r^{-1} \nabla_\xi \times \vphi_{i}  -k^2 \alpha^2 \epsilon_r \vphi_{i} =  &  -k^2\alpha^2 (1-\epsilon_r)
\ve_i  && \text{in $B$} ,\\
\nabla_\xi \times \nabla_\xi  \times\vphi_{i} -k^2\alpha^2 \vphi_{i}= & \vzero && \text{in $B^c$},\\
\nabla_\xi\cdot \vphi_{i}   = & 0 && \text{in ${\mathbb R}^3$},\\
[\vn \times\vphi_{i} ]_\Gamma =\vzero, \qquad [\vn \times \tilde{\mu}_r^{-1} \nabla_\xi \times\vphi_{i} ]_\Gamma  =& \vzero&& \text{on $\Gamma$},\\
\lim_{\xi \to \infty} \xi \left ( ( \nabla_\xi \times\vphi_{i} ) \times \hat{\vxi} - \im k \alpha\vphi_{i} \right ) = &  \vzero.
\end{align}
\end{subequations}
Furthermore, given that the only source term is the gradient of a scalar, the solution to (\ref{eqn:transph}) can be expressed in terms of $\vphi_i = (\epsilon_r-1) \nabla  \vartheta_i$ where $\vartheta_i$ solves
\begin{subequations} \label{eqn:transphsca}
\begin{align}
\nabla_\xi \cdot  \epsilon_r\nabla_\xi  \vartheta_{i} =  &  0
  && \text{in $B$} ,\\
\nabla_\xi \cdot \nabla_\xi  \vartheta_{i}= &0 && \text{in $B^c$},\\
[\vartheta_i]_\Gamma = 0 \qquad \vn \cdot \nabla_\xi \vartheta_i|_+  - \vn \cdot \epsilon_r \nabla_\xi  \vartheta_i |_-  = & \vn \cdot \ve_i && \text{on $\Gamma$},\\
\vartheta_i  \to & 0 && \text{as $|\vxi|\to \infty$}.
\end{align}
\end{subequations}

\section{Integral representation formulae} \label{sect:integral}

A different integral representation formula for $\vHd(\vx ) = (\vHa-\vHb)(\vx) $ for $\vx$ away from $B_\alpha$ is required compared to that used in~\cite{Ammari2014}, since the eddy current approximation is no longer applied. The following is appropriate for describing the perturbed magnetic field outside of $B_\alpha$ and is the same as used in~\cite{LedgerLionheart2015pert}
\begin{equation}
\vHd (\vx) =\nabla_x \times \left ( \int_{\Gamma_\alpha} G_k(\vx,\vy) \vn \times \vHd(\vy) |_+ \dif \vy \right )  - \frac{\im}{k} \nabla_x \times \left (\nabla_x \times \int_{\Gamma_\alpha} G_k(\vx,\vy) \vn \times \vEd(\vy) |_+ \dif \vy  \right ). \label{eqn:intrepbd}
\end{equation}
We transform this result to be expressed in terms of volume integrals over $B_\alpha$ and express the result in the lemma below.

\begin{lemma}
An integral representation formula for $\vHd(\vx ) = (\vHa-\vHb)(\vx) $ for $\vx$ away from $B_\alpha$ expressed in terms of volume integrals over $B_\alpha$  is
\begin{align}
\vHd (\vx) =  &- \im k(\epsilon_r -1) \int_{B_\alpha} \nabla_x G_k (\vx,\vy) \times \vEa (\vy) \dif \vy + k^2 (\mu_r -1) \int_{B_\alpha} G_k(\vx,\vy) \vHa (\vy) \dif \vy \nonumber \\
&+ (\mu_r-1) \int_{B_\alpha} \vD_y^2 G_k(\vx,\vy) \vHa(\vy) \dif \vy .\label{eqn:intrepvol}
\end{align}

\end{lemma}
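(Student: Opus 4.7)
The plan is to start from the surface representation (\ref{eqn:intrepbd}), convert each boundary integral on $\Gamma_\alpha$ into a volume integral over $B_\alpha$ via Stokes' theorem and the bulk Maxwell equations, and then move the $\nabla_x$ operators under the integral sign. No estimates are needed; the argument uses only the PDE satisfied by the fields, the transmission conditions, and standard calculus identities for the Helmholtz kernel.

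The transmission conditions (\ref{eqn:transcond}), together with the continuity of $\vEb,\vHb$ across $\Gamma_\alpha$, give $[\vn\times\vHd]_{\Gamma_\alpha}=[\vn\times\vEd]_{\Gamma_\alpha}=\vzero$, so the $|_+$ traces in (\ref{eqn:intrepbd}) may be replaced by $|_-$. I then apply $\int_{B_\alpha}\nabla_y\times(G_k\vu)\,\dif\vy=\int_{\Gamma_\alpha}\vn\times(G_k\vu)|_-\,\dif\vy$ with $\vu=\vHd$ and $\vu=\vEd$, expand $\nabla_y\times(G_k\vu)=\nabla_yG_k\times\vu+G_k\nabla_y\times\vu$, and substitute the in-object identities
\begin{align*}
\nabla_y\times\vHd&=-\im k(\epsilon_r\vEa-\vEb),\\
\nabla_y\times\vEd&=\im k(\mu_r\vHa-\vHb),
\end{align*}
which follow from (\ref{eqn:maxwell}) on $B_\alpha$ since $\vJ$ is supported away from the inclusion. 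This turns the surface integrals into sums of volume integrals over $B_\alpha$.

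Because $\vx$ lies outside $B_\alpha$, $G_k(\vx,\cdot)$ is smooth on $B_\alpha$ and the $\nabla_x$ operators in (\ref{eqn:intrepbd}) pass through the integrals. I then use $\nabla_yG_k=-\nabla_xG_k$, $\Delta_xG_k=-k^2G_k$ for $\vx\neq\vy$, and the three identities (for $\vu$ depending only on $\vy$)
\begin{align*}
\nabla_x\times(\nabla_xG_k\times\vu)&=\vD_x^2G_k\,\vu+k^2G_k\vu,\\
\nabla_x\times\nabla_x\times(G_k\vu)&=\vD_x^2G_k\,\vu+k^2G_k\vu,\\
\nabla_x\times(\vD_x^2G_k\,\vu)&=\vzero,
\end{align*}
the last following from $\varepsilon_{rpm}\partial_{x_m}\partial_{x_p}\equiv 0$. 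This reduces (\ref{eqn:intrepbd}) to a linear combination of volume integrals of three types: $\nabla_xG_k\times(\cdot)$, $G_k(\cdot)$, and $\vD_x^2G_k(\cdot)$.

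Collecting terms by type, the $\nabla_xG_k$ contributions combine through $\vEd=\vEa-\vEb$ into $-\im k(\epsilon_r-1)\nabla_xG_k\times\vEa$, producing the first term of (\ref{eqn:intrepvol}); the $G_k$ and $\vD_x^2G_k$ pieces combine through $\vHd=\vHa-\vHb$ and the identity $(\mu_r\vHa-\vHb)-\vHd=(\mu_r-1)\vHa$ to give the two $(\mu_r-1)$ terms. Since $G_k(\vx,\vy)$ is a function of $\vx-\vy$ one has $\vD_x^2G_k=\vD_y^2G_k$, completing the identification with (\ref{eqn:intrepvol}). The principal obstacle is purely bookkeeping: keeping signs straight when commuting $\nabla_x$ with $\nabla_y$ and when subtracting the $\vEb,\vHb$ contributions that appear through the Maxwell substitutions, so that the contrast factors $\epsilon_r-1$ and $\mu_r-1$ emerge cleanly.
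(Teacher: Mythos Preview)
Your proposal is correct and follows essentially the same route as the paper: start from the surface representation (\ref{eqn:intrepbd}), convert the boundary integrals to volume integrals over $B_\alpha$ via the transmission conditions and $\int_{B_\alpha}\nabla_y\times(G_k\vu)\,\dif\vy=\int_{\Gamma_\alpha}\vn\times(G_k\vu)\,\dif\vy$, push $\nabla_x$ under the integral, apply vector-calculus identities for the Helmholtz kernel, and substitute the interior Maxwell relations so that the contrasts $\epsilon_r-1$ and $\mu_r-1$ emerge. The only difference is cosmetic: the paper expands $\nabla_x\times(\nabla_yG_k\times\vu)$ directly via $\nabla\times(\va\times\vb)=\va\,\nabla\cdot\vb-\vb\,\nabla\cdot\va+(\vb\cdot\nabla)\va-(\va\cdot\nabla)\vb$ to reach an intermediate expression (their (\ref{eqn:intpt3})) before inserting Maxwell, whereas you use the pre-packaged identities $\nabla_x\times(\nabla_xG_k\times\vu)=\vD_x^2G_k\,\vu+k^2G_k\vu$ and $\nabla_x\times(\vD_x^2G_k\,\vu)=\vzero$ and substitute Maxwell along the way; the two computations are equivalent.
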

\begin{proof}
Using the transmission conditions (\ref{eqn:transcond}), transforming the first term of (\ref{eqn:intrepbd}) to a volume integral and then applying $\nabla \times(\va \times \vb) = \va \nabla\cdot \vb - \vb \nabla \cdot \va + ( \vb \cdot \nabla) \va - (\va \cdot \nabla)\vb$  gives
\begin{align}
\nabla_x \times & \left ( \int_{\Gamma_\alpha} G_k(\vx,\vy) \vn \times \vHd(\vy) |_+ \dif \vy \right ) =  \nabla_x \times \left ( \int_{B_\alpha} \nabla_y \times (G_k(\vx,\vy) \vHd(\vy)) \dif \vy \right ) \nonumber \\
 =& \nabla_x \times \left ( \int_{B_\alpha} \nabla_y G_k(\vx,\vy) \times \vHd(\vy) + G_k(\vx,\vy) \nabla_y \times  \vHd(\vy)  \dif \vy \right ) \nonumber \\
 =&   \int_{B_\alpha} \left ( \nabla_y G_k(\vx,\vy) (\nabla_x \cdot \vHd(\vy) )- \vHd(\vy) \nabla_x\cdot \nabla_y  G_k(\vx,\vy)  \right . \nonumber \\
& \left .+ (\vHd(\vy) \cdot \nabla_x) (\nabla_y G_k(\vx,\vy) )- (\nabla_y G_k(\vx,\vy) \cdot \nabla_x)( \vHd(\vy))  +  \nabla_x G_k(\vx,\vy) \times  \nabla_y \times  \vHd(\vy)  \dif \vy
 \right ) \dif \vy \nonumber \\
 =&   \int_{B_\alpha} \left (-k^2  \vHd(\vy) G_k(\vx,\vy) 
 - \vD_x^2 G_k(\vx,\vy)  (\vHd(\vy) )+   \nabla_x G_k(\vx,\vy) \times \nabla_y \times  \vHd(\vy)  \right ) \dif \vy \label{eqn:intpt1},
  \end{align}
where $\nabla_x\cdot \nabla_y  G_k(\vx,\vy)  = -\nabla_x\cdot \nabla_x  G_k(\vx,\vy) = k^2 G_k(\vx,\vy)$ and $\nabla_x \nabla_y  G_k(\vx,\vy) = - \vD_x^2 G_k (\vx,\vy)$ have been applied.
 Considering the  the second term in (\ref{eqn:intrepbd}),  and applying similar ideas, gives
\begin{align}
\nabla_x  &\times \left (\nabla_x \times \int_{\Gamma_\alpha} G_k(\vx,\vy) \vn \times \vEd(\vy) _+ \dif \vy  \right ) = \nabla_x \times \left (\nabla_x \times \int_{B_\alpha}  \nabla_y G_k(\vx,\vy) \times \vEd(\vy) + G_k(\vx,\vy) \nabla_y \times \vEd(\vy) \dif \vy \right ) \nonumber \\
& = \nabla_x \times \int_{B_\alpha} - k^2\vEd(\vy) G_k(\vx,\vy) - \vD_x^2  G_k(\vx,\vy) \vEd(\vy) + \nabla_x G_k(\vx,\vy) \times \nabla_y \times \vEd(\vy) \dif \vy \nonumber\\
& = \int_{B_\alpha} \left ( - k^2\nabla_x G_k(\vx,\vy) \times   \vEd(\vy)   -  \nabla_y \times \vEd(\vy) (\nabla_x \cdot \nabla_x  G_k(\vx,\vy)) + 
(\nabla_y \times \vEd(\vy) \cdot \nabla_x ) \nabla_x G_k(\vx,\vy)  \right ) \dif \vy \nonumber \\
& = \int_{B_\alpha} \left ( - k^2\nabla_x G_k(\vx,\vy) \times   \vEd(\vy)   +k^2   \nabla_y \times \vEd(\vy)  G_k(\vx,\vy) + 
\vD_x^2 G_k(\vx,\vy)  (\nabla_y \times \vEd(\vy) )  \right ) \dif \vy \label{eqn:intpt2}.
\end{align}
Using (\ref{eqn:intpt1}) and (\ref{eqn:intpt2}) in (\ref{eqn:intrepbd})  gives
\begin{align}
\vHd (\vx) =& -k^2 \int_{B_\alpha}   \vHd(\vy) G_k(\vx,\vy) \dif \vy
 -  \int_{B_\alpha}   \vD_x^2 G_k(\vx,\vy)  (\vHd(\vy) )\dif \vy +   \int_{B_\alpha}    \nabla_x G_k(\vx,\vy) \times \nabla_y \times  \vHd(\vy)   \dif \vy \nonumber \\
& - \im k \int_{B_\alpha} \nabla_x G_k(\vx,\vy) \times   \vEd(\vy)  \dif \vy  -\im k  \int_{B_\alpha}  \nabla_y \times \vEd(\vy)  G_k(\vx,\vy) \dif \vy \nonumber \\
&- \frac{\im}{k} 
\int_{B_\alpha} \vD_x^2 G_k(\vx,\vy)  (\nabla_y \times \vEd(\vy) )   \dif \vy .  \label{eqn:intpt3}
\end{align}
Then, using (\ref{eqn:maxwell}), we have
\begin{subequations}
\begin{align}
\nabla \times \vEd & = \im k \tilde{\mu}_r \vHd + \im k (\tilde{\mu_r}-1) \vHb,   \\
\nabla \times \vHd & = - \im k \tilde{\epsilon}_r \vEd -\im k (\tilde{\epsilon}_r-1) \vEb,  
\end{align} 
\end{subequations}
and inserting these into (\ref{eqn:intpt3}) completes the proof.
\end{proof}

\section{Proof of Theorem~\ref{thm:main}} \label{sect:proof}
We write (\ref{eqn:intrepvol}) as the sum
\begin{align}
\vHd (\vx) = &  -\im k(\epsilon_r -1) \int_{B_\alpha} \nabla_x G_k (\vx,\vy) \times \vEa (\vy) \dif \vy + (\mu_r-1) \int_{B_\alpha} \vD_y^2 (G_k(\vx,\vy)+k^2G_k(\vx,\vy)  {\mathbb I})  \vHa(\vy) \dif \vy \nonumber \\
& = \vT_1 +\vT_2 .
\end{align}
{\em Approximation of $\vT_1$}\\
Let $\vT_1 = \vT_1^a + \vT_1^b + \vT_1^c + \vT_1^d$ where
\begin{align}
\vT_1^a & := -\im k(\epsilon_r -1) \int_{B_\alpha} \nabla_x G_k (\vx,\vy) \times \left ( \vEa (\vy) - \vEb(\vy) - \alpha \vw_0 \left (  \frac{\vy -\vz}{\alpha } 
\right ) - \frac{\epsilon_r-1}{\epsilon_r }\nabla \phi_0 \right )\dif \vy  , \nonumber\\
\vT_1^b & := -\im k(\epsilon_r -1) \int_{B_\alpha} \nabla_x G_k (\vx,\vy) \times \left ( \vEb(\vy)+\nabla \phi_0 -\vF(\vy)  \right ) \dif \vy 
-\im k\frac{(\epsilon_r -1)}{\epsilon_r} \int_{B_\alpha} \nabla_x G_k (\vx,\vy) \times \nabla \phi_0   \dif \vy 
, \nonumber \\
\vT_1^c & := -\im k(\epsilon_r -1) \int_{B_\alpha} \left ( \nabla_x G_k (\vx,\vy) - \nabla_x G_k (\vx,\vz) + \vD_x^2 G_k (\vx,\vz)  (\vy-\vz) \right )\times \left (  \vF(\vy) + \alpha \vw_0 \left (  \frac{\vy -\vz}{\alpha }   \right ) \right )\dif \vy , \nonumber\\
\vT_1^d & := -\im k(\epsilon_r -1) \int_{B_\alpha} \left ( \nabla_x G_k (\vx,\vz) - \vD_x^2 G_k (\vx,\vz)  (\vy-\vz) \right )\times \left (  \vF(\vy) + \alpha \vw_0 \left (  \frac{\vy -\vz}{\alpha }   \right )  \right )\dif \vy. \nonumber 
\end{align} 
Using Theorem~\ref{thm:energyestf}, we estimate
\begin{align}
|\vT_1^a| & \le C k |\epsilon_r -1| \alpha^{3/2} \alpha^{9/2} \| \nabla \times \vEb \|_{W^{2,\infty}(B_\alpha)} \mu_r  \left (|1- \mu_r^{-1} |+ | \nu | \right ) \nonumber \\
& \le C \alpha^4 \|  \vHb \|_{W^{2,\infty}(B_\alpha)} \nonumber .
\end{align} 
Next, using (\ref{eqn:ineqebf}) and (\ref{eqn:phi}) we estimate
\begin{align}
|\vT_1^b| \le & C k |\epsilon_r -1| \alpha^{3/2}\alpha^{9/2} \| \nabla \times \vEb \|_{W^{2,\infty}(B_\alpha)}+  C k^3 \left  |1 -\frac{1}{\epsilon_r} \right | \alpha^{3/2}\alpha^{3/2}  \alpha^2 \| \vEb \|_{W^{1,\infty}(B_\alpha)} \nonumber \\
\le &  C\left (  \alpha^4 \|  \vHb \|_{W^{2,\infty}(B_\alpha)} +  k^3 \alpha^5  \left  |1 -\frac{1}{\epsilon_r} \right | \| \vEb \|_{W^{1,\infty}(B_\alpha)} \right )  \nonumber .
\end{align}
Then, using (\ref{eqn:newf}) and (\ref{eqn:neww}),
\begin{align}
|\vT_1^c| \le & C k |\epsilon_r -1| \alpha^{3/2}\alpha^{2} \alpha^{3/2} \alpha \| \nabla \times \vEb \|_{W^{2,\infty}(B_\alpha)} \nonumber \\
\le &  C \alpha^4 \|  \vHb \|_{W^{2,\infty}(B_\alpha)}\nonumber .
\end{align}
Similarly, $\vT_1^{d}= \vT_1^{d,1} + \vT_1^{d,2}+\vT_1^{d,3}+\vT_1^{d,4} +\vT_1^{d,5} + \vT_1^{d,6}$ where
\begin{align}
\vT_1^{d,1}  := & -\frac{\im k \alpha^4 (\epsilon_r -1)}{2} \sum_{i=1}^3  \nabla_x G_k (\vx,\vz)\times  \int_{B}  \left (  \ve_i \times \vxi + \vth_i \right ) \dif \vxi (\im k \vHb(\vz))_i ,  \nonumber \\
\vT_1^{d,2}  := & -  \im k \alpha^3 (\epsilon_r -1) \sum_{i=1}^3  \nabla_x G_k(\vx,\vz) \times \int_B \left (   \ve_i   + \vphi_i \right ) \dif \vxi (  \vEb(\vz))_i , \nonumber \\
\vT_1^{d,3}  := & - \frac{ \im k \alpha^5 (\epsilon_r -1)}{3} \sum_{i,j=1}^3  \nabla_x G_k(\vx,\vz) \times \int_B \left (  \xi_j   \ve_i \times \vxi  + \vpsi_{ij} \right ) \dif \vxi (  \vD_z( \im k \vHb(\vz)))_{ij} , \nonumber \\
\vT_1^{d,4} : = &\frac{\im k \alpha^5 (\epsilon_r -1)}{2} \sum_{i=1}^3  \int_B  \vD_x^2  G_k(\vx,\vz) \vxi  \times \left (  \ve_i \times \vxi + \vth_i \right ) \dif \vxi (\im k \vHb(\vz))_i , \nonumber \\
\vT_1^{d,5}  := & -  \im k \alpha^4 (\epsilon_r -1) \sum_{i=1}^3   \int_B  \vD_x^2  G_k(\vx,\vz) \vxi  \times  \left (   \ve_i   + \vphi_i \right ) \dif \vxi (  \vEb(\vz))_i , \nonumber \\
\vT_1^{d,6}  := & -  \frac{\im k \alpha^6 (\epsilon_r -1)}{3} \sum_{i,j=1}^3   \int_B  \vD_x^2  G_k(\vx,\vz) \vxi  \times \left (\xi_j   \ve_i \times \vxi  + \vpsi_{ij} \right ) \dif \vxi (  \vD_z( \im k \vHb(\vz)))_{ij} , \nonumber 
\end{align}
Defining $\vR(\vx)$ such that $ |\vR(\vx)| \le  C \left ( \alpha^4  \| \vHb \|_{W^{2,\infty}(B_\alpha)} + \alpha^4 k \left (  |\epsilon_r-1| + 
 k^2 \alpha  \left  |1 -\frac{1}{\epsilon_r} \right | \right )  \| \vEb \|_{W^{1,\infty}(B_\alpha)} \right ) ,$ we see that
 $ 
\vT_1^{a}, \vT_1^{b}, \vT_1^{c} $ and
$ \vT_1^{d,5}$ are elements of $\vR(\vx)$ and, since $ \vD_z( \im k \vHb(\vz))$ is related to $\vEb$,  $\vT_1^{d,3}$ and $\vT_1^{d,6}$ are also elements of $\vR(\vx)$.
\begin{remark}
In the case of the eddy current regime, where  (\ref{eqn:transth}) and  (\ref{eqn:transph}) simplify to
\begin{subequations} \label{eqn:transthsimp}
\begin{align}
\nabla_\xi \times \mu_r^{-1} \nabla_\xi \times \vth_i -\nu_\im  \vth_i=  &  \nu_\im \ve_i\times \vxi && \text{in $B$},\\
\nabla_\xi \times \nabla_\xi  \times\vth_i = & \vzero && \text{in $B^c$}, \\
\nabla_\xi\cdot \vth_i   = & 0 && \text{in ${\mathbb R}^3$}, \\
[\vn \times\vth_i ]_\Gamma=\vzero, \qquad [\vn \times \tilde{\mu}_r^{-1} \nabla_\xi \times\vth_i ]_\Gamma =& -2 (1-\mu_r^{-1})\vn \times \ve_i && \text{on $\Gamma$}, \\
\vth_i   = &  O(|\vxi|^{-1}) && \text{as $|\vxi| \to \infty$},
\end{align}
\end{subequations}
and 
\begin{subequations}\label{eqn:transphsimp}
\begin{align} 
\nabla_\xi \times \mu_r^{-1} \nabla_\xi \times \vphi_{i}  -\nu_\im \vphi_{i} =  &  \nu_\im \ve_i && \text{in $B$} ,\\
\nabla_\xi \times \nabla_\xi  \times\vphi_{i} = & \vzero && \text{in $B^c$}\\
\nabla_\xi\cdot \vphi_{i}   = & 0 && \text{in ${\mathbb R}^3$} , \\
[\vn \times\vphi_{i} ]_\Gamma =\vzero, \qquad [\vn \times \tilde{\mu}_r^{-1} \nabla_\xi \times\vphi_{i} ]_\Gamma  =& \vzero&& \text{on $\Gamma$}, \\
\vphi_i   = &  O(|\vxi|^{-1}) && \text{as $|\vxi| \to \infty$},
\end{align}
\end{subequations}
respectively, we find that $\vT_1^{d,1}= \vT_1^{d,2}=\vzero$ by using the above transmission problems and integration by parts. Similarly, it can be shown that $\vT_1^{d,3}=\vzero$ by simplifying the transmission problem for $\vpsi_{ij}$ in a similar way to the above.
\end{remark}

In general,  we write
\begin{align}
\vT_1 =   & -\im k\ve_j   (\nabla_x G_k(\vx,\vz))_p \varepsilon_{jpr} ( {\mathcal A}_{ri} ( \vHb(\vz))_i + {\mathcal B}_{ri}(\vEb(\vz))_i )  
\nonumber\\
&+  \ve_j  (( \vD_x^2  G_k(\vx,\vz)) _{\ell m} {\mathcal P}_{\ell m ji } (\vHb(\vz))_i + \vR(\vx) , \label{eqn:t1sum}
\end{align}
where 
\begin{subequations} \label{eqn:tenfromt1}
\begin{align}
{\mathcal A}_{ri}   := & \frac{ \im k \alpha^4 (\epsilon_r -1)}{2} \ve_r \cdot  \int_{B}  \left (  \ve_i \times \vxi + \vth_i \right ) \dif \vxi , \\
{\mathcal B}_{ri}   := & \alpha^3  (\epsilon_r -1) \ve_r\cdot  \int_B \left ( \ve_i   + \vphi_i \right ) \dif \vxi ,  \\
{\mathcal P}_{\ell m ji} := & -\frac{ k^2 \alpha^5 (\epsilon_r -1)}{2} \ve_j  \cdot \int_B \ve_{\ell} \times (\xi_m  \left (  \ve_i \times \vxi + \vth_i \right )) \dif \vxi  \nonumber \\
= & \varepsilon_{j \ell s} {\mathcal C}_{m si},  \\
{\mathcal C}_{msi} :=& \frac{1}{2} \varepsilon_{sj\ell}  {\mathcal P}_{\ell m ji}  =-\frac{k^2 \alpha^5 (\epsilon_r -1)}{2} \ve_s \cdot \int_B \xi_m  \left (  \ve_i \times \vxi + \vth_i \right ) \dif \vxi   ,
\end{align}
\end{subequations}
and the latter follows by considering the skew symmetry of ${\mathcal P}$ w.r.t  indices $j$ and $\ell$ and applying similar arguments to~\cite{LedgerLionheart2015}. Further simplifications are also possible if the eddy current approximation applies~\cite{LedgerLionheart2015} and we shall consider these later in Section~\ref{sect:altform}.

\noindent{\em Approximation of $\vT_2$} \\
Let $\vT_2 =  (\mu_r -1)( \vT_2^a + \vT_2^b + \vT_2^c+\vT_2^d)$ where
\begin{align}
\vT_2^a: = & \int_{B_\alpha} ( \vD_x^2 G_k (\vx,\vy)+k^2 G_k(\vx,\vy) {\mathbb I}) \left ( \vHa (\vy) - \mu_r^{-1} \vHb(\vy) - \mu_r^{-1} \vHb^* \left ( \frac{\vy - \vz}{\alpha}\right ) \right ) \dif \vy , \nonumber \\
\vT_2^b: = & \mu_r^{-1}  \int_{B_\alpha} ( \vD_x^2 G_k(\vx,\vy) -\vD_x^2  G_k(\vx,\vz )+k^2(G_k(\vx,\vy)-G_k(\vx,\vz)){\mathbb I}) \left (  \vHb(\vy) + \vHb^* \left ( \frac{\vy - \vz}{\alpha}\right ) \right )\dif \vy ,\nonumber \\
\vT_2^c: = & \mu_r^{-1}  \int_{B_\alpha} (\vD_x^2 G_k(\vx,\vz) +k^2G_k(v,z){\mathbb I} )\left (  \vHb(\vy) - \vHb  ( \vz ) \right ) \dif \vy , \nonumber \\
\vT_2^d: = & \mu_r^{-1}  \int_{B_\alpha} ( \vD_x^2 G_k(\vx,\vz)+k^2G_k(\vx,\vx){\mathbb I}) \left (  \vHb(\vz) + \vHb^*  \left ( \frac{\vy - \vz}{\alpha}\right ) \right ) \dif \vy \nonumber ,
\end{align}
and $\vHb^*(\vxi) = \frac{1}{\im k} \nabla \times \vw_0$. 
Following similar arguments to the above, and in~\cite{Ammari2014}, we have
\begin{align}
| \vT_2^a | \le &  C \alpha^4 \| \vHb \|_{W^{2,\infty}(B_\alpha)} , \nonumber \\
| \vT_2^b | \le &  C \alpha^4 \| \vHb \|_{W^{2,\infty}(B_\alpha)} , \nonumber \\
| \vT_2^c | \le &  C \alpha^4 \| \vHb \|_{W^{2,\infty}(B_\alpha)} , \nonumber 
\end{align}
and we can express $\vT_2^d$ as 
\begin{align}
\vT_2^d =  &  \mu_r^{-1}\alpha ^3  \left ( \sum_{i=1}^3 ( \vD_x^2 G_k(\vx,\vz) +k^2G_k(\vx,\vz){\mathbb I})\int_{B}  \left ( \ve_i + \frac{1}{2} \nabla \times \vth_i
\right ) \dif \vxi  (  \vHb(\vz))_i  \right ) \nonumber \\
& +  \mu_r^{-1}\alpha ^4 \left ( \sum_{i,j=1}^3 ( \vD_x^2 G_k(\vx,\vz) +k^2G_k(\vx,\vz){\mathbb I})\int_{B}  \left (\xi_j \ve_i + \frac{1}{3} \nabla \times \vpsi_{ij}
\right ) \dif \vxi  ( \vD_z( \vHb(\vz)))_{ij}  \right ) \nonumber \\
 =  & \mu_r^{-1}\alpha ^3  \sum_{i=1}^3  ( \vD_x^2 G_k(\vx,\vz) +k^2G_k(\vx,\vz){\mathbb I})\int_{B}  \left ( \ve_i + \frac{1}{2} \nabla \times \vth_i
\right ) \dif \vxi ( \vHb(\vz))_i  + \vR(\vx) .
 \nonumber 
\end{align}
So that
\begin{align}
\vT_2 = &  (\vD_x^2 G_k(\vx,\vz) +k^2G_k(\vx,\vz))\ve_j {\mathcal N}_{ji} (  \vHb(\vz))_i + \vR(\vx) , \label{eqn:t2sum} 
\end{align}
where
\begin{align}
{\mathcal N}_{ji}   := \alpha^3 (1-\mu_r^{-1}) \ve_j \cdot \int_B \left ( \ve_i + \frac{1}{2} \nabla \times \vth_i \right ) \dif \vxi.  \label{eq:n0}
\end{align}

\section{Simplifications and alternative forms} \label{sect:altform}

\begin{lemma} \label{lemma:btensmallk}
Using $\vphi_i = (\epsilon_r-1) \nabla \vartheta_i$ where $\vartheta_i$ solves (\ref{eqn:transphsca}) then  ${\mathcal B}_{ri} $ becomes
\begin{align}
{\mathcal B}_{ri} = \alpha^3 \left ( (\epsilon_r-1) |B| \delta_{ri} + (\epsilon_r-1)^2  \int_B \ve_r \cdot \nabla \vartheta_i \dif \vxi \right ),
\end{align}
which are also the coefficients of the symmetric P\'oyla-Szeg\"o tensor ${\mathcal T}[\alpha B, \epsilon_r]$ of an object $B_\alpha$ for a contrast $\epsilon_r$.
\end{lemma}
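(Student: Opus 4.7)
The plan is to prove the lemma almost entirely by direct substitution, followed by a comparison with the standard definition of the P\'oyla--Szeg\"o tensor to confirm identification and symmetry.

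First I would insert the ansatz $\vphi_i = (\epsilon_r-1)\nabla \vartheta_i$ --- which the paragraph preceding (\ref{eqn:transphsca}) already justifies --- into the defining expression
\begin{align*}
{\mathcal B}_{ri} = \alpha^3 (\epsilon_r-1) \ve_r \cdot \int_B (\ve_i + \vphi_i) \dif \vxi,
\end{align*}
and distribute over the sum. The first integral collapses via $\int_B \ve_i \dif \vxi = |B|\ve_i$ to give $\alpha^3(\epsilon_r-1)|B|\delta_{ri}$, while the second yields $\alpha^3(\epsilon_r-1)^2 \int_B \ve_r \cdot \nabla \vartheta_i \dif \vxi$ after pulling out the scalar factor $(\epsilon_r-1)$. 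Adding these two contributions gives the claimed formula for ${\mathcal B}_{ri}$.

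To identify the result with the symmetric P\'oyla--Szeg\"o tensor ${\mathcal T}[\alpha B,\epsilon_r]$, I would compare with the standard definition of this tensor arising from the scalar transmission problem (\ref{eqn:transphsca}), as given for example in Ammari--Kang and used in the authors' earlier work \cite{LedgerLionheart2019}; the two expressions agree termwise after rescaling by $\alpha^3$. To verify the symmetry ${\mathcal B}_{ri}={\mathcal B}_{ir}$ intrinsically, I would multiply the equation for $\vartheta_i$ by $\vartheta_r$ (suitably extended to $\mathbb{R}^3$), integrate by parts separately over $B$ and $B^c$, apply the transmission condition on $\Gamma$, and use the decay at infinity to discard the boundary term at infinity. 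The resulting bilinear form is manifestly symmetric in $i$ and $r$, which together with the direct formula establishes the assertion.

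The main obstacle, minor though real, is the bookkeeping in the integration-by-parts step used for symmetry. The jump condition $\vn \cdot \nabla \vartheta_i|_+ - \vn \cdot \epsilon_r \nabla \vartheta_i|_- = \vn \cdot \ve_i$ in (\ref{eqn:transphsca}) is asymmetric between the interior and exterior sides of $\Gamma$, so the boundary contributions from both domains must be combined carefully --- using the continuity $[\vartheta_i]_\Gamma = 0$ --- in order to reveal the $(\epsilon_r-1)^2$-weighted symmetric form. Once this cancellation is performed, both the symmetry of ${\mathcal B}$ and its identification with the P\'oyla--Szeg\"o tensor fall out simultaneously.
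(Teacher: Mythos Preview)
Your proposal is correct and follows essentially the same route as the paper: the paper's proof simply states that the result ``directly follows from (\ref{eqn:tenfromt1}b) and (\ref{eqn:transphsca}),'' i.e.\ exactly the direct substitution you describe. Your additional integration-by-parts argument for the symmetry is not needed in the paper because the identification with the known P\'olya--Szeg\"o tensor already carries symmetry; it is a fine (and correct) elaboration, but not a different approach.
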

\begin{proof}
The proof directly follows from (\ref{eqn:tenfromt1}b) and (\ref{eqn:transphsca}).
\end{proof}

\begin{lemma} \label{lemma:sskewsym}
Up to a residual term, ${\mathcal C}_{msi}$ is skew symmetric w.r.t. $m$ and $i$ and hence
\begin{align}
{\mathcal C}_{msi} = \varepsilon_{msr} \check{{\mathcal C}}_{ri} +R_{msi}, \qquad  \check{{\mathcal C}}_{ri}: =- \frac{ \nu \alpha^3}{4} \ve_r \cdot \int_B \vxi \times ( \vth_i + \ve_i \times \vxi ) \dif \vxi,
\end{align}
where $|R_{msi}| \le C\alpha^4 k$. 
\end{lemma}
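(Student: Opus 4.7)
The approach is to isolate the $(m,s)$-antisymmetric part of the integrand in $\mathcal{C}_{msi}$ algebraically. Setting $\bm{G}_i(\vxi):=\vth_i(\vxi)+\ve_i\times\vxi$ and noting that $k^2\alpha^5(\epsilon_r-1)=\nu\alpha^3$, the elementary identity $\xi_m(\bm{G}_i)_s - \xi_s(\bm{G}_i)_m = \varepsilon_{msr}(\vxi\times\bm{G}_i)_r$ yields
\begin{equation*}
\xi_m(\bm{G}_i)_s = \tfrac{1}{2}\varepsilon_{msr}(\vxi\times\bm{G}_i)_r + \tfrac{1}{2}\bigl(\xi_m(\bm{G}_i)_s + \xi_s(\bm{G}_i)_m\bigr).
\end{equation*}
Inserting this into the definition of $\mathcal{C}_{msi}$, the antisymmetric half reproduces $\varepsilon_{msr}\check{\mathcal{C}}_{ri}$ exactly, so that the residual is $R_{msi}=-\tfrac{\nu\alpha^3}{4}\int_B(\xi_m(\bm{G}_i)_s+\xi_s(\bm{G}_i)_m)\dif\vxi$. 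Because both $\vth_i$ and $\ve_i\times\vxi$ are divergence free on $\mathbb{R}^3$, $\nabla_\xi\cdot\bm{G}_i=0$ and hence $\xi_m(\bm{G}_i)_s+\xi_s(\bm{G}_i)_m=\nabla_\xi\cdot(\xi_m\xi_s\bm{G}_i)$, so the divergence theorem rewrites
\begin{equation*}
R_{msi} = -\tfrac{\nu\alpha^3}{4}\int_{\partial B}\xi_m\xi_s\,\vn\cdot\bm{G}_i|_-\,\dif s.
\end{equation*}

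To bound this by $C\alpha^4 k$, I would test the weak form of the interior PDE (\ref{eqn:transth}a) against the curl-free field $\vv=\nabla_\xi(\xi_m\xi_s)=\xi_s\ve_m+\xi_m\ve_s$. Since $\nabla_\xi\times\vv=\vzero$, the curl--curl bulk term reduces to a single boundary contribution on $\partial B$, and using $\nabla_\xi\cdot\vth_i=0$ the reaction integrals likewise collapse to surface integrals, producing the identity
\begin{equation*}
k^2\alpha^2\epsilon_r\!\int_{\partial B}\!\xi_m\xi_s\,\vn\cdot\bm{G}_i\,\dif s = k^2\alpha^2\!\int_{\partial B}\!\xi_m\xi_s\,\vn\cdot(\ve_i\times\vxi)\,\dif s + \!\int_{\partial B}\!\vv\cdot\bigl(\vn\times\mu_r^{-1}\nabla_\xi\times\vth_i|_-\bigr)\dif s.
\end{equation*}
Substituting this identity into $R_{msi}$, the prefactor $\nu/\epsilon_r=k^2\alpha^2(1-\epsilon_r^{-1})$ is $O(k^2\alpha^2)$ in the regime $|\nu|=O(1)$, so the first right-hand contribution is bounded by $Ck^2\alpha^5\le Ck\alpha^4$ under $k\alpha\le 1$. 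The second contribution carries the smaller coefficient $\nu\alpha/(k^2\epsilon_r)=O(\alpha^3)$, and will yield the claimed bound once the remaining surface integral is controlled by $O(k\alpha)$.

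The main obstacle is establishing this last estimate on $\int_{\partial B}\vv\cdot(\vn\times\mu_r^{-1}\nabla_\xi\times\vth_i|_-)\dif s$ uniformly in the admissible parameter range $|\nu|=O(1)$, $\mu_r=O(1)$, $k\alpha\le 1$, in which $|\epsilon_r|$ can be as large as $O((k\alpha)^{-2})$. I would approach it through a refined variational energy bound for the rescaled transmission problem (\ref{eqn:transth}) on the unit domain $B$, modelled on Lemma~\ref{lemma:energy} but tracking the $k\alpha$ scaling explicitly, together with a $H(\mathrm{curl};B)$ trace inequality on $\partial B$; together these should deliver the required $O(k\alpha)$ control and close the estimate $|R_{msi}|\le C\alpha^4 k$.
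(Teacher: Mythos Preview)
Your algebraic splitting of $\xi_m(\bm G_i)_s$ into its $(m,s)$--symmetric and antisymmetric halves is clean and correctly reproduces the term $\varepsilon_{msr}\check{\mathcal C}_{ri}$; the divergence--theorem reduction of the symmetric half to the surface integral $-\tfrac{\nu\alpha^3}{4}\int_{\partial B}\xi_m\xi_s\,\vn\cdot\bm G_i|_-\,\dif s$ is also valid. This is a genuinely different route from the paper's. The paper does not decompose algebraically; instead it integrates by parts using the transmission problem for $\vth_i$ in the spirit of Lemma~4.2 of \cite{LedgerLionheart2015}, and the extra terms in the full Maxwell problem (the $-k^2\alpha^2\vth_i$ bulk term in $B^c$ and the radiation condition at infinity, absent in the eddy current case) produce directly
\[
R_{msi}=-\frac{\im k\alpha^4}{2}\,\ve_m\cdot\!\int_{\Gamma_\infty}\xi_s\vth_i\,\dif\xi+\frac{\alpha^5 k^2}{2}\,\ve_m\cdot\!\int_{B\cup B^c}\xi_s\vth_i\,\dif\xi,
\]
from which the $\alpha^4k$ bound is read off from the explicit prefactors.

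The gap in your argument is exactly where you flag it: you have not controlled $\int_{\partial B}\vv\cdot(\vn\times\mu_r^{-1}\nabla_\xi\times\vth_i|_-)\,\dif s$ by $O(k\alpha)$, and this is not a routine energy estimate. A uniform $H(\mathrm{curl};B)$ bound on $\vth_i$ plus a trace inequality will only give $O(1)$, not $O(k\alpha)$; a cancellation is needed. In fact, if you push your own calculation one step further---use the jump condition to pass to $\vn\times\nabla_\xi\times\vth_i|_+$ (the piece $2(1-\mu_r^{-1})\vn\times\ve_i$ integrates to zero against the curl--free $\vv$), then integrate by parts in $B^c$ using $\nabla_\xi\times\nabla_\xi\times\vth_i=k^2\alpha^2\vth_i$ there---you recover precisely the far--field integral over $\Gamma_\infty$ and the $k^2\alpha^2$--weighted volume integral over $B^c$ that appear in the paper's $R_{msi}$. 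So your route does close, but only by arriving at the paper's expression; the ``refined variational energy bound'' you propose is not the right tool here.
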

\begin{proof}
The proof follows from  applying integration by parts to (\ref{eqn:tenfromt1}) in a similar manner to the proof of Lemma 4.2 in~\cite{LedgerLionheart2015}. Compared to this proof, the additional term
\begin{equation}
R_{msi} = -\frac{\im k \alpha^4}{2} \ve_m \cdot \ \int_{\Gamma_\infty}\xi_s \vth_i \dif \xi+  \frac{\alpha^5 k^2}{2} \ve_m \cdot \int_{B \cup B^c} \xi_s \vth_i \dif \xi,
\end{equation}
arises, which is not skew symmetric w.r.t. $m$ and $i$. We estimate that $|R_{msi}| \le C \alpha^4 k$.
\end{proof}
\begin{corollary} \label{coll:sskewsym}
If $\nu$ reduces to $\nu_\im$, the transmission problem for $\vth_i(\vxi)$ provided in (\ref{eqn:transth}) reduces to (\ref{eqn:transthsimp}) and $R_{msi}=0$ since this is now identical to the case considered in Lemma 4.2 of~\cite{LedgerLionheart2015}.
\end{corollary}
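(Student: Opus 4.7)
I would split the argument into two steps that track the two claims in the corollary: the reduction of (\ref{eqn:transth}) to (\ref{eqn:transthsimp}) in the eddy current limit, and the vanishing of the residual $R_{msi}$ coming from Lemma~\ref{lemma:sskewsym}.

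For the first step, the eddy current regime is characterised by $k\alpha \to 0$ together with $\nu_\im = \sigma_*\mu_0\omega\alpha^2 = O(1)$. Since $k^2\alpha^2 \epsilon_r = \nu_r + \im\nu_\im$, the hypothesis $\nu \to \nu_\im$ (that is, $\nu_r\to 0$) combined with $k^2\alpha^2 \to 0$ gives a term-by-term simplification of (\ref{eqn:transth}). In $B$ the coefficient $k^2\alpha^2 \epsilon_r$ multiplying $\vth_i$ collapses to a multiple of $\nu_\im$, and correspondingly the source $-k^2\alpha^2(1-\epsilon_r)\ve_i\times \vxi$ collapses to $\nu_\im\ve_i\times \vxi$, reproducing the equation in (\ref{eqn:transthsimp}) in $B$. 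In $B^c$ the term $-k^2\alpha^2 \vth_i$ disappears, leaving the source-free curl-curl equation $\nabla_\xi\times\nabla_\xi\times \vth_i = \vzero$. Finally, the Silver--M\"uller radiation condition, whose wavenumber $k\alpha$ tends to zero, degenerates into the decay condition $\vth_i = O(|\vxi|^{-1})$ appearing in (\ref{eqn:transthsimp}). The transmission and divergence conditions are unchanged, so the limiting problem is exactly (\ref{eqn:transthsimp}).

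For the second step, I would re-run the integration by parts behind Lemma~\ref{lemma:sskewsym} starting from (\ref{eqn:transthsimp}) instead of (\ref{eqn:transth}). The two extra contributions making up
\[
R_{msi} = -\frac{\im k \alpha^4}{2} \ve_m \cdot \int_{\Gamma_\infty}\xi_s \vth_i \dif \xi+  \frac{\alpha^5 k^2}{2} \ve_m \cdot \int_{B \cup B^c} \xi_s \vth_i \dif \xi
\]
originate from the $\im k\alpha$ wavenumber weight in the Silver--M\"uller condition (which produces the $\Gamma_\infty$ boundary integral when transferring the curl-curl operator onto $\xi_s \vth_i$) and from the $-k^2\alpha^2 \vth_i$ term in the exterior PDE (which produces the volume integral as an $L^2$ pairing). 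In the limiting problem the latter contribution is simply absent, while the former cannot arise because the conditions $\vth_i=O(|\vxi|^{-1})$, $\nabla_\xi\cdot\vth_i = 0$ and $\nabla_\xi\times\nabla_\xi\times\vth_i=\vzero$ in $B^c$ are exactly the hypotheses under which the integration by parts in the proof of Lemma~4.2 of~\cite{LedgerLionheart2015} closes without a boundary contribution at infinity. Invoking that lemma then yields an exact skew-symmetry of ${\mathcal C}_{msi}$ in $m$ and $i$, giving $R_{msi}=0$.

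The main subtlety I anticipate is justifying the vanishing of the $\Gamma_\infty$ integral rigorously. In the full Maxwell problem the Silver--M\"uller behaviour only yields just enough decay for this surface term to be retained as a bounded residue, which is what forces the $R_{msi}$ correction in Lemma~\ref{lemma:sskewsym}. In the eddy current regime the algebraic multipole decay inherited from $\nabla_\xi\times\nabla_\xi\times\vth_i=\vzero$ in $B^c$ together with the vanishing divergence is genuinely stronger and is precisely what allows one to invoke Lemma~4.2 of~\cite{LedgerLionheart2015} verbatim to conclude that $R_{msi}=0$ exactly.
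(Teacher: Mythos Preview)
Your proposal is correct and follows the same route as the paper, which does not give a separate proof for this corollary beyond the self-evident statement that the eddy current limit reproduces the setting of Lemma~4.2 in~\cite{LedgerLionheart2015}. One minor slip: you write $k^2\alpha^2\epsilon_r = \nu_r + \im\nu_\im$, but in fact $k^2\alpha^2(\epsilon_r-1) = \nu$, so $k^2\alpha^2\epsilon_r = \nu + k^2\alpha^2$; this does not affect your argument since you correctly invoke $k^2\alpha^2\to 0$ alongside $\nu_r\to 0$.
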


By differentiation it is easily established that
\begin{align}
\nabla_x G_k( \vx, \vz)  = & \frac{(\vx-\vz)}{4\pi |\vx -\vz|^2 } \left (  \im k  - \frac{1}{  |\vx - \vz| }   \right ) e^{\im k |\vx -\vz|} , \nonumber \\
\vD_x^2 G_k( \vx, \vz)  = & \frac{1}{4\pi  } \left ( \frac{1}{  |\vx - \vz| ^3} \left (\frac{3 (\vx-\vz) \otimes (\vx-\vz) }{|\vx - \vz|^2 }- \mathbb{I} \right )    -\frac{ \im k}{  |\vx - \vz| ^2 } \left  ( \frac{3 (\vx-\vz) \otimes (\vx-\vz)}{  |\vx - \vz| ^2} - {\mathbb I}\right  ) \right .  \nonumber \\
&\left .  -\frac{k^2}{  |\vx - \vz|^3 } (\vx-\vz) \otimes (\vx-\vz)  \right ) e^{\im k |\vx -\vz|} , \nonumber
\end{align}
and, by introducing $\vr:= \vx - \vz $, $r=|\vr|$ and $\hat{\vr}= \vr /r$, these derivatives can be expressed as
\begin{align}
\nabla_x G_k( \vx, \vz)  = & \frac{\hat{\vr}}{4\pi r } \left (  \im k  - \frac{1}{  r}   \right ) e^{\im k r} , \nonumber \\
\vD_x^2 G_k( \vx, \vz)  = & \frac{1}{4\pi  } \left ( \frac{1}{r^3 } (3 \hat{\vr} \otimes \hat{\vr} - \mathbb{I} )    - \frac{\im k}{ r^2} ( 3 \hat{\vr}  \otimes \hat{\vr} - {\mathbb I} )   -\frac{k^2}{r} \hat{\vr} \otimes\hat{\vr}   \right ) e^{\im k r}.\nonumber
\end{align}
Noting that ${\mathcal C}_{msi} = \varepsilon_{msr} \check{{\mathcal C}}_{ri} +R_{msi} $ by Lemma~\ref{lemma:sskewsym} we get
\begin{align}
(\vD_x^2 G_k( \vx, \vz) )_{\ell m } \varepsilon_{j\ell s} \varepsilon_{msr} \check{{\mathcal C}}_{ri} = & - (\vD_x^2 G_k( \vx, \vz) )_{\ell m } \varepsilon_{s j\ell} \varepsilon_{smr} \check{{\mathcal C}}_{ri}\nonumber \\
= &- (\vD_x^2 G_k( \vx, \vz) )_{\ell m }  ( \delta_{jm} \delta_{\ell r} - \delta_{jr} \delta_{\ell m} ) \check{{\mathcal C}}_{ri} \nonumber\\
= &(- (\vD_x^2 G_k( \vx, \vz) )_{r j } +  (\vD_x^2 G_k( \vx, \vz) )_{m m }\delta_{jr}) \check{{\mathcal C}}_{ri} \nonumber \\
= &-( (\vD_x^2 G_k( \vx, \vz) )_{jr } +k^2 \delta_{jr}G_k(\vx,\vz)) \check{{\mathcal C}}_{ri} . \nonumber
\end{align}
Furthermore, we introduce the rank 2 tensor ${\mathcal M}$ with coefficients ${\mathcal M}_{ri} : = {\mathcal N}_{ri } - \check{{\mathcal C}}_{ri}$, which is symmetric as the following lemma shows:
\begin{lemma}
The tensor ${\mathcal M}={\mathcal N}-\check{\mathcal C}$ is complex symmetric with coefficients satisfying ${\mathcal M}_{ri} = {\mathcal M}_{ir}$.
\end{lemma}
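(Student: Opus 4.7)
The plan is to isolate the non-manifestly symmetric parts of ${\mathcal N}_{ri}$ and $\check{{\mathcal C}}_{ri}$ and show they combine to zero in ${\mathcal M}_{ri}-{\mathcal M}_{ir}$ via a reciprocity identity derived from (\ref{eqn:transth}). Introducing the shorthand
$$X_{ri}:=\int_\Gamma(\vn\times\ve_r)\cdot\vth_i\,\dif\sigma, \qquad Y_{ri}:=\int_B(\ve_r\times\vxi)\cdot\vth_i\,\dif\vxi,$$
I will express each tensor as the sum of a manifestly $(r,i)$-symmetric part and an asymmetric part built from $X$ and $Y$.

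The first step is the algebraic decomposition. For $\check{{\mathcal C}}_{ri}$, the identity $\vxi\times(\ve_i\times\vxi)=|\vxi|^2\ve_i-(\ve_i\cdot\vxi)\vxi$ yields $\ve_r\cdot\int_B\vxi\times(\ve_i\times\vxi)\,\dif\vxi = \int_B(|\vxi|^2\delta_{ri}-\xi_i\xi_r)\,\dif\vxi$, which is manifestly symmetric. Combined with the triple product $\ve_r\cdot(\vxi\times\vth_i)=(\ve_r\times\vxi)\cdot\vth_i$ this gives $\check{{\mathcal C}}_{ri} = \check{{\mathcal C}}_{ri}^{\rm sym}-\frac{\nu\alpha^3}{4}Y_{ri}$. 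For ${\mathcal N}_{ri}$, the term $\alpha^3(1-\mu_r^{-1})|B|\delta_{ri}$ is already symmetric, while Stokes' identity $\int_B\nabla_\xi\times\vth_i\,\dif\vxi = \int_\Gamma\vn\times\vth_i|_-\,\dif\sigma$ together with $\ve_r\cdot(\vn\times\vth_i)=-(\vn\times\ve_r)\cdot\vth_i$ gives ${\mathcal N}_{ri} = \alpha^3(1-\mu_r^{-1})|B|\delta_{ri}-\frac{\alpha^3(1-\mu_r^{-1})}{2}X_{ri}$.

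The second step is the reciprocity identity. Multiplying the strong form of (\ref{eqn:transth}) by $\vth_r$ (without complex conjugation, to exploit the complex symmetric rather than Hermitian structure of the Maxwell operator) and integrating over $\mathbb{R}^3$, then integrating by parts on each side of $\Gamma$ and applying the jump conditions, produces
$$a(\vth_i,\vth_r) = -2(1-\mu_r^{-1})X_{ir} + \nu Y_{ir},$$
where $a(\vu,\vv) := \int_B\mu_r^{-1}(\nabla\times\vu)\cdot(\nabla\times\vv)\,\dif\vxi + \int_{B^c}(\nabla\times\vu)\cdot(\nabla\times\vv)\,\dif\vxi - k^2\alpha^2\int_B\epsilon_r\vu\cdot\vv\,\dif\vxi - k^2\alpha^2\int_{B^c}\vu\cdot\vv\,\dif\vxi$ is manifestly invariant under the swap $\vu\leftrightarrow\vv$. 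Equating $a(\vth_i,\vth_r)=a(\vth_r,\vth_i)$ then delivers the key identity $2(1-\mu_r^{-1})(X_{ri}-X_{ir}) = \nu(Y_{ri}-Y_{ir})$.

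Assembling ${\mathcal M}_{ri}-{\mathcal M}_{ir}$ from the step-one decompositions produces $-\frac{\alpha^3(1-\mu_r^{-1})}{2}(X_{ri}-X_{ir}) + \frac{\nu\alpha^3}{4}(Y_{ri}-Y_{ir})$, which collapses to zero upon substitution of the key identity. The main obstacle is the treatment of the boundary-at-infinity terms arising from the integration by parts in the unbounded region $B^c$: with a non-conjugated pairing and outgoing Silver--Müller radiation, the individual boundary integrands on $|\vxi|=R$ are of size $O(e^{2\im k\alpha R})$ and do not decay, but the manifest symmetry of the underlying pairing $\vth_i\cdot\vth_r = \vth_r\cdot\vth_i$ ensures these contributions cancel exactly in the difference $a(\vth_i,\vth_r)-a(\vth_r,\vth_i)$ at leading order as $R\to\infty$, so the complex symmetric identity survives in the limit and delivers ${\mathcal M}_{ri}={\mathcal M}_{ir}$.
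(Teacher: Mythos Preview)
Your proof is correct and takes a genuinely different route from the paper's. The paper proceeds constructively: it applies integration by parts to $\check{{\mathcal C}}_{ri}$ to rewrite it entirely in terms of bilinear integrals of $\vth_i$ and $\vth_r$ (and their curls), and then adds ${\mathcal N}_{ri}$ to obtain an explicit formula (equation~(\ref{eqn:symnmc})) for ${\mathcal M}_{ri}$ that is \emph{manifestly} invariant under $r\leftrightarrow i$. Your approach instead isolates the antisymmetric residues $X_{ri}-X_{ir}$ and $Y_{ri}-Y_{ir}$ of ${\mathcal N}$ and $\check{{\mathcal C}}$ separately, and kills them via a single reciprocity identity obtained by testing (\ref{eqn:transth}) against $\vth_r$ without conjugation. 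What the paper's route buys is the explicit symmetric representation~(\ref{eqn:symnmc}), which is reused later (notably in Lemma~\ref{lemma:smallkmten}) to identify the small-$k$ limit with the P\'olya--Szeg\H{o} tensor; your argument is shorter and more transparent for the bare symmetry claim, but does not by itself produce that formula.

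One point worth tightening: as you note, the bilinear form $a(\vth_i,\vth_r)$ you write down is not absolutely convergent over $B^c$ for outgoing solutions (both $\int_{B^c}|\nabla\times\vth|^2$ and $\int_{B^c}|\vth|^2$ diverge like $\int R^{-2}\cdot R^2\,\dif R$). The clean fix is to carry out the whole calculation on the truncated domain $B\cup(B_R\setminus\overline B)$, where $a_R$ is well-defined and symmetric, and then observe that the far-field boundary contribution $\int_{|\vxi|=R}\big((\hat\vxi\times\nabla\times\vth_i)\cdot\vth_r-(\hat\vxi\times\nabla\times\vth_r)\cdot\vth_i\big)\,\dif\sigma$ vanishes as $R\to\infty$ by the Silver--M\"uller condition, since to leading order it reduces to $\im k\alpha\int_{|\vxi|=R}\big((\hat\vxi\cdot\vth_i)(\hat\vxi\cdot\vth_r)-\vth_i\cdot\vth_r\big)-\big((\hat\vxi\cdot\vth_r)(\hat\vxi\cdot\vth_i)-\vth_r\cdot\vth_i\big)\,\dif\sigma=0$. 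This is exactly the cancellation you describe; stating it on the truncated domain avoids writing down a divergent $a$. The paper's expression~(\ref{eqn:symnmc}) has the same feature: the individual integrals over $B\cup B^c$ there are to be read as the finite combination they collapse to after one integration by parts.
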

\begin{proof}
By an application of integration parts to $\check{C}_{ri}$ we find
\begin{align}
\frac{-\check{{\mathcal C}}_{ri}}{\alpha^3} = &  \frac{1}{4 \nu}\int_B \nabla \times \mu_r^{-1} \nabla \times \vth_i \cdot \nabla \times \mu_r^{-1} \nabla \times \vth_r \dif \vxi \nonumber \\
&-\frac{1}{4} \left (\frac{\epsilon_r}{\epsilon_r-1} -1 \right ) \left ( \int_{B\cup B^c} \tilde{\mu}_r^{-1} \nabla \times \vth_i \cdot \nabla \times \vth_r \dif \vxi
-\int_{B^c} \alpha^2 k^2 \vth_i \cdot \vth_r \dif \vxi - 2[\tilde{\mu}_r^{-1}]\int_B \ve_r \cdot \nabla \times \vth_i \dif \vxi \right ) \nonumber \\
&-\frac{1}{4} \left (\frac{\epsilon_r}{\epsilon_r-1}  \right ) \left ( \int_{B\cup B^c} \tilde{\mu}_r^{-1} \nabla \times \vth_i \cdot \nabla \times \vth_r \dif \vxi
-\int_{B^c} \alpha^2 k^2 \vth_i \cdot \vth_r \dif \vxi - 2[\tilde{\mu}_r^{-1}]\int_B \ve_i \cdot \nabla \times \vth_r \dif \vxi \right ) \nonumber \\
&+\frac{1}{4} \left (\frac{\epsilon_r}{\epsilon_r-1} -1  \right ) \left ( \int_{B} \alpha^2 k^2 \epsilon_r  \vth_i \cdot  \vth_r \dif \vxi \right ), \nonumber
\end{align}
so that
\begin{align}
&\frac{{\mathcal N}_{ri} -\check{{\mathcal C}}_{r i}}{\alpha^3} =   \frac{1}{4 \nu}\int_B \nabla \times \mu_r^{-1} \nabla \times \vth_i \cdot \nabla \times \mu_r^{-1} \nabla \times \vth_r \dif \vxi + \nonumber \\
& \frac{1}{4} \int_{B\cup B^c} \tilde{\mu}_r^{-1} \nabla \times \vth_i \cdot \nabla \times \vth_r \dif \vxi + [\tilde{\mu}_r^{-1}] \int_B \delta_{r i} \dif \vxi -
\frac{\alpha^2 k^2}{4} \int_{B\cup B^c}\tilde{\epsilon}_r  \vth_i \cdot \vth_r \dif \vxi \nonumber \\
&-\frac{1}{2} \left (\frac{\epsilon_r}{\epsilon_r-1} \right ) \left ( \int_{B\cup B^c} \tilde{\mu}_r^{-1} \nabla \times \vth_i \cdot \nabla \times \vth_r \dif \vxi
-\int_{B^c} \alpha^2 k^2 \vth_i \cdot \vth_r \dif \vxi - [\tilde{\mu}_r^{-1}]\int_B (\ve_r \cdot \nabla \times \vth_i + \ve_i \cdot \nabla \times \vth_r)  \dif \vxi \right ) \nonumber \\
&+\frac{1}{4} \left (\frac{\epsilon_r}{\epsilon_r-1}   \right ) \left ( \int_{B} \alpha^2 k^2 \epsilon_r  \vth_i \cdot  \vth_r \dif \vxi \right ) \label{eqn:symnmc} ,
\end{align}
which is symmetric.
\end{proof}

It follows that an alternative form to (\ref{eqn:main}) is
\begin{align}
(\vHd(\vx)) = & -\im k \ve_j (\nabla_x G_k(\vx,\vz))_p \varepsilon_{jpr} ( {\mathcal A}_{ri} ( \vHb(\vz))_i + {\mathcal B}_{ri}(\vEb(\vz))_i )\nonumber \\
&  +\ve_j ( (\vD_x^2 G_k(\vx,\vz))_{j r}+ k^2 \delta_{jr} G_k(\vx,\vz)) {\mathcal M}_{ri} ( \vHb(\vz))_i + \vR(\vx) \nonumber  \\
 = & \frac{e^{\im k r}}{4\pi}  \left \{- \left ( - \frac{k^2}{  r} \hat{\vr} \times ( {\mathcal A}\vHb(\vz)  + {\mathcal B} \vEb(\vz)) -\frac{\im k}{ r^2 } \hat{\vr} \times  ( {\mathcal A}\vHb(\vz)  + {\mathcal B} \vEb(\vz)) \right ) \right . \nonumber \\
&+  \frac{1}{r^3} \left ( 3 \hat{\vr} \cdot ({\mathcal M} \vHb(\vz)  ) \hat{\vr} - {\mathcal M} \vHb(\vz)   \right ) - \frac{\im k}{r^2} 
\left ( 3  \hat{\vr} \cdot ({\mathcal M} \vHb(\vz)  ) \hat{\vr} - {\mathcal M} \vHb(\vz)   \right ) \nonumber \\
& \left . - \frac{k^2}{r} \left (    \hat{\vr} \cdot {\mathcal M} \vHb(\vz)   - {\mathcal M} \vHb(\vz)   \right ) \right \} +\vR(\vx) \nonumber \\
 = & \frac{e^{\im k r}}{4\pi}  \left \{
  \frac{1}{r^3} \left ( 3 \hat{\vr} \cdot ({\mathcal M} \vHb(\vz)  ) \hat{\vr} - {\mathcal M} \vHb(\vz)   \right ) \right . \nonumber \\
&  -\frac{\im k}{ r^2 } \left ( 3  \hat{\vr} \cdot ({\mathcal M} \vHb(\vz)  ) \hat{\vr} - {\mathcal M} \vHb(\vz)   -  
  \hat{\vr} \times  ( {\mathcal A}\vHb(\vz)  + {\mathcal B} \vEb(\vz)) \right )  \nonumber \\
  &  \left . - \frac{k^2}{r} \left (    \hat{\vr} \times \hat{\vr } \times ( {\mathcal M} \vHb(\vz) )
-  \hat{\vr} \times ( {\mathcal A}\vHb(\vz)  + {\mathcal B} \vEb(\vz)) \right ) \right \} +\vR(\vx) \label{eqn:mainalt},
  \end{align}
  where we have assumed that  {$(\vD_x^2 G_k( \vx, \vz) )_{\ell m } \varepsilon_{j\ell s} R_{msi} ({\vec H}_0 ({\vec z}))_i$} can be grouped with $\vR(\vx)$ and have applied $\hat{\vr} \times (\hat{\vr}  \times \va ) =\hat{\vr}  (\hat{\vr}  \cdot \va) - \va (\hat{\vr}  \cdot \hat{\vr} ) = \hat{\vr} (\hat{\vr}  \cdot \va) - \va $.   
  
\begin{remark}
Expression (\ref{eqn:mainalt}) bears close resemblance to the result in Theorem 4.1 of~\cite{LedgerLionheart2015pert}, which describes the field perturbation for wave dominated problems, where, ${\mathcal B}$ is  the P\'oyla-Szeg\"o tensor parameterised by  $\epsilon_r$, as expected, but, instead of a P\'oyla-Szeg\"o tensor parameterised by  $\mu_r$  we have the complex symmetric rank 2 tensor ${\mathcal M}$, which depends on $\mu_r$ and $\nu$. Furthermore, we have the additional term $ {\mathcal A}\vHb(\vz)$. We explore this connection further in later sections.
\end{remark}

\subsection{Quasi-static regime}
Assuming that $\alpha \ll \lambda_0$ and noting that  $(\vD_x^2 G_k( \vx, \vz) )_{jr } +k^2 \delta_{jr }G_k(\vx,\vz)  = (\vD_x^2 G_0( \vx, \vz) )_{jr  }+O(k) $ and $\nabla_x G_k( \vx, \vz) =  \nabla_x G_0( \vx, \vz) +O(k)$ as $k\to 0$ then
expression (\ref{eqn:mainalt}) reduces to
\begin{align}
(\vHd(\vx)) = & -\im k \ve_j (\nabla_x G_0(\vx,\vz))_p \varepsilon_{jpr} ( {\mathcal A}_{ri} ( \vHb(\vz))_i + {\mathcal B}_{ri}(\vEb(\vz))_i )\nonumber \\
&  +\ve_j  (\vD_x^2 G_0(\vx,\vz))_{j r}  {\mathcal M}_{ri} ( \vHb(\vz))_i + \vR(\vx), \nonumber  
 \end{align}
 and, in the near field, the dominant  response is
\begin{align}
(\vHd(\vx)) \approx &  \ve_j  (\vD_x^2 G_0(\vx,\vz))_{j r}   {\mathcal M}_{ri } ( \vHb(\vz))_i , \label{eqn:dominant}
 \end{align}
 where the coefficients of ${\mathcal M}={\mathcal N}-\check{\mathcal C}$ depend on $\epsilon_*$, $\mu_*$, $\sigma_*$, $\alpha $ and $\omega$ and are obtained from Lemma~\ref{lemma:sskewsym} and (\ref{eq:n0}) using (\ref{eqn:transth}).  {The expression (\ref{eqn:dominant}) describes the magnetic field perturbation in terms of a complex symmetric rank 2 tensor ${\mathcal M}$, extending the MPT  characterisation of objects considered in~\cite{LedgerLionheart2015,LedgerLionheart2019} for the eddy current problem, to the regime where  $\alpha \ll \lambda_0$, but without requiring $\sigma_*\gg \epsilon_* \omega$.}

\subsection{Eddy current regime}
The eddy current regime is a low frequency approximation of the Maxwell system where, in addition to the quasi-static approximation, we have $\sigma_*\gg \epsilon_* \omega$ and $\epsilon_*= \epsilon_0$ so that $\nu_\im = O(1)$ and displacement currents are neglected. Hence, (\ref{eqn:transth}) reduces to (\ref{eqn:transthsimp}) and (\ref{eqn:transph}) to (\ref{eqn:transphsimp}). Then, by application of integration by parts on ${\mathcal A}_{ri}$ and ${\mathcal B}_{ri}$ in (\ref{eqn:tenfromt1}a) and (\ref{eqn:tenfromt1}b) gives ${\mathcal A}_{ri}=0$ and ${\mathcal B}_{ri}=0$. Furthermore, by Corollary~\ref{coll:sskewsym}, ${\mathcal C}_{msi} = \varepsilon_{msr} \check{{\mathcal C}}_{ri}$ with $R_{msi}=0$ in this case and ${\mathcal M}= {\mathcal N}-\check{\mathcal C}$ becomes the MPT discussed in~\cite{LedgerLionheart2015,LedgerLionheart2019}.
Hence, 
\begin{align}
(\vHd(\vx)) = & \ve_j (\vD_x^2 G_0( \vx, \vz) )_{jr } {\mathcal M}_{ri} (\vHb(\vz))_i + \vR(\vx)  \nonumber \\
= & \frac{1}{4 \pi r^3} \left ( 3 \hat{\vr} \cdot ({\mathcal M} \vHb(\vz)  ) \hat{\vr} - {\mathcal M} \vHb(\vz)   \right ) + \vR(\vx),
\end{align}
 where the coefficients of ${\mathcal M}$ no longer depend on $\epsilon_*$ and are obtained using (\ref{eqn:transthsimp}) instead of (\ref{eqn:transth}).
 
  \begin{remark}
  In practice, all metals have $\sigma_*\gg \epsilon_0 \omega$  over all the frequencies for which $\sigma_*$ can be regarded as a constant  so that $\epsilon_r \approx \epsilon_r-1 \approx \sigma_*/(\im \epsilon_0 \omega)$ holds over these frequencies and, for sufficiently small objects, these also extend to all the frequencies for which the quasi-static approximation holds.  This means that the coefficients of ${\mathcal M}$ obtained for the eddy current model using (\ref{eqn:transthsimp}) are almost indistinguishable from those obtained using (\ref{eqn:transth}) provided that  $\sigma_*\gg \epsilon_0 \omega$ and the object is sufficiently small. As an example, we compare the diagonal coefficients of ${\mathcal M}$  obtained using the two models using the analytical solution of Wait~\cite{Wait1951} in Figure~\ref{fig:mptcomp}.  
  Of course, by making the eddy current approximation, and using (\ref{eqn:transthsimp}), then ${\mathcal A}_{ri}=0$ and, using (\ref{eqn:transphsimp}), ${\mathcal B}_{ri}=0$ indicating that (\ref{eqn:dominant}) is indeed the dominant response for the quasi-static regime for metallic objects, which holds not only in the near field. 
 \end{remark}

\begin{figure}[h]
\begin{center}
$\begin{array}{cc}
\includegraphics[width=3in]{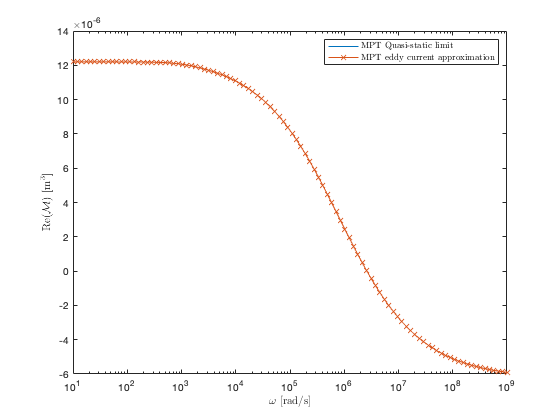} &
\includegraphics[width=3in]{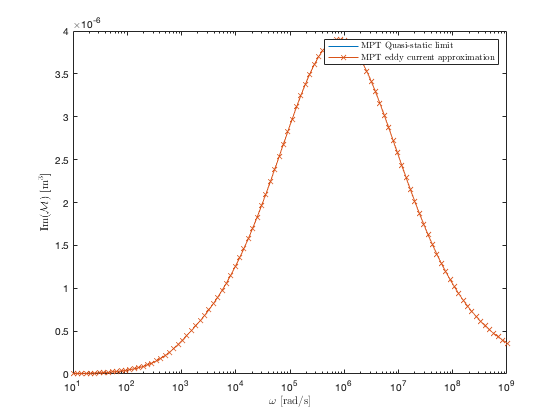} 
\end{array}$
\end{center}
\caption{Comparison of the diagonal coefficients of ${\mathcal M}$ for a conducting sphere with $\mu_r=100$, $\sigma_*=1\times 10^6$S/m and radius $\alpha=0.01$m for frequencies $1\times 10^1 \le \omega \le 1 \times 10^9$ rad/s for results obtained using the quasi-static model and eddy current approximation} \label{fig:mptcomp}
\end{figure}

\subsection{Regime with non-constant parameters}
In this regime,  {we no longer assume the parameters are constant and relax the assumption of} $\nu=O(1)$. First, we consider the case of $\sigma_*=0$ and  {require} the other parameters ($\epsilon_r=\epsilon_*/\epsilon_0$, $\mu_r$) to change so that $k \to 0$ as $\alpha \to 0$. Second, we  {require} the  parameters ($\epsilon_r=1/\epsilon_0 ( \epsilon_* + \sigma_*/(\im \omega)) $, $\mu_r$ and $k$) to change as $\alpha \to 0$.

\subsubsection{Small $k$ and $\sigma_* =0 $}
We consider small $k$ and $\sigma_*=0$, so that $\epsilon_r=\epsilon_*/\epsilon_0$. In this case, we have $\vth_i= \tilde{\vth}_i^{(0)} + O(k)$ as $k \to 0$ where $\tilde{\vth}_i^{(0)}$ solves
\begin{subequations} \label{eqn:transthwavs}
\begin{align}
\nabla_\xi \times \mu_r^{-1} \nabla_\xi \times \tilde{\vth}_i^{(0)}=  &  \vzero  && \text{in $B$},\\
\nabla_\xi \times \nabla_\xi \times  \tilde{\vth}_i^{(0)} = & \vzero && \text{in $B^c$},\\
\nabla_\xi\cdot \tilde{\vth}_i^{(0)}   = & 0 && \text{in ${\mathbb R}^3$},\\
[\vn \times\tilde{\vth}_i^{(0)} ]_\Gamma=\vzero, \qquad [\vn \times \tilde{\mu}_r^{-1} \nabla_\xi \times\tilde{\vth}_i^{(0)} ]_\Gamma =& -2 (1-\mu_r^{-1})\vn \times \ve_i && \text{on $\Gamma$},\\
\tilde{\vth}_i^{(0)}  = & O(|\vxi|^{-1}|) && \text{as $|\vxi| \to \infty$},
\end{align}
\end{subequations}
which allow us to deduce the following about the coefficients of ${\mathcal M}$:
\begin{lemma} \label{lemma:smallkmten}
For small $k$ and $\sigma_*= 0$, $\vth_i = \tilde{\vth}_i^{(0)} + O(k)$  as $k \to 0$ and ${\mathcal M}_{ri} = {\mathcal N}_{ri} -\check{{\mathcal C}}_{ri}$ reduces to ${\mathcal M}_{ri} = {\mathcal N}^{(0)}_{ri} +O(k^2) $ where
\begin{align}
{\mathcal N}^{(0)}_{r i} = \frac{\alpha^3 }{4} \int_{B\cup B^c} \tilde{\mu}_r^{-1} \nabla \times \tilde{\vth}_i^{(0)} \cdot \nabla \times \tilde{\vth}_r^{(0)} \dif \vxi + [\tilde{\mu}_r^{-1}] \int_B \delta_{r i} \dif \vxi ,
\end{align}
which are also the coefficients of the P\'oyla-Szeg\"o tensor ${\mathcal T}[\alpha B,\mu_r]$ of an object $B_\alpha$ for a contrast $\mu_r$ as defined e.g. in Lemma 3 of~\cite{LedgerLionheart2016}. 
\end{lemma}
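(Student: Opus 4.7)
The plan is to proceed by perturbation analysis of the transmission problem (\ref{eqn:transth}) as $k \to 0$ with $\sigma_* = 0$, so that $\epsilon_r = \epsilon_*/\epsilon_0$ remains fixed and $\nu = \alpha^2 k^2(\epsilon_r-1) = O(k^2)$. First I would verify $\vth_i = \tilde{\vth}_i^{(0)} + O(k)$: in $B$ the volume equation reduces to $\nabla_\xi \times \mu_r^{-1}\nabla_\xi \times \vth_i = O(k^2)$; in $B^c$ one has $\nabla_\xi \times \nabla_\xi \times \vth_i = O(k^2)$; the jump conditions are independent of $k$; and the Silver--M\"uller radiation condition degenerates to the $|\vxi|^{-1}$-decay imposed on $\tilde{\vth}_i^{(0)}$. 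Continuous dependence of the transmission problem on $k$, by a standard variational argument applied to the sesquilinear form underlying (\ref{eqn:transth}), delivers the stated asymptotics.

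From its definition, $\check{{\mathcal C}}_{ri} = -\frac{\nu \alpha^3}{4}\ve_r \cdot \int_B \vxi \times (\vth_i + \ve_i\times\vxi)\dif\vxi$ is immediately $O(k^2)$, since $\nu = O(k^2)$ and the integrand is uniformly bounded on the compact set $B$. The heart of the argument is to establish ${\mathcal N}_{ri} = {\mathcal N}^{(0)}_{ri} + O(k^2)$ rather than merely $O(k)$. For this I would work not from the primary definition of ${\mathcal N}_{ri}$ but from the symmetric representation (\ref{eqn:symnmc}) of ${\mathcal N}_{ri} - \check{{\mathcal C}}_{ri}$, in which: (i) the $\frac{1}{4\nu}\int_B \nabla \times \mu_r^{-1}\nabla \times \vth_i \cdot \nabla \times \mu_r^{-1}\nabla \times \vth_r\dif\vxi$ term is $O(k^2)$ because the integrand is $O(k^4)$ from the PDE while $\nu^{-1} = O(k^{-2})$; (ii) the explicit $\alpha^2 k^2$ volume integrals are $O(k^2)$; and (iii) the bracket inside the term with prefactor $\tfrac{1}{2}\epsilon_r/(\epsilon_r-1)$ vanishes at leading order through the static identity established next.

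That identity is proved by integrating by parts in $B$ and $B^c$ separately for the static problem (\ref{eqn:transthwavs}), using $\nabla_\xi\times \mu_r^{-1}\nabla_\xi\times \tilde{\vth}_i^{(0)} = \vzero$ piecewise, the jump $[\vn \times \tilde{\mu}_r^{-1}\nabla_\xi \times \tilde{\vth}_i^{(0)}]_\Gamma = -2(1-\mu_r^{-1})\vn\times\ve_i$, and Stokes' theorem $\int_B \nabla_\xi\times \tilde{\vth}_r^{(0)}\dif\vxi = \int_\Gamma \vn\times\tilde{\vth}_r^{(0)}|_-\dif s$. Two symmetric versions of the computation (swapping $r$ and $i$) can be averaged to give
\begin{equation*}
\int_{B\cup B^c}\tilde{\mu}_r^{-1}\nabla_\xi\times\tilde{\vth}_i^{(0)}\cdot\nabla_\xi\times\tilde{\vth}_r^{(0)}\dif\vxi = [\tilde{\mu}_r^{-1}]\int_B \left(\ve_r\cdot\nabla_\xi\times\tilde{\vth}_i^{(0)} + \ve_i\cdot\nabla_\xi\times\tilde{\vth}_r^{(0)}\right)\dif\vxi,
\end{equation*}
which is exactly the bracket in the $\tfrac{1}{2}\epsilon_r/(\epsilon_r-1)$ term evaluated at $\vth_i = \tilde{\vth}_i^{(0)}$, thereby producing the required cancellation. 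Collecting the surviving $O(1)$ contributions in (\ref{eqn:symnmc}) assembles ${\mathcal N}^{(0)}_{ri}$ in its symmetric form, and identification with the P\'olya--Szeg\"o tensor ${\mathcal T}[\alpha B, \mu_r]$ then follows directly from Lemma~3 of~\cite{LedgerLionheart2016}.

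I anticipate the main obstacle to be the rigorous justification of the $O(k^2)$ (rather than $O(k)$) order of the residual: the linear-in-$k$ contribution that would otherwise survive inside ${\mathcal N}_{ri}$ is eliminated only through the cancellation described above, and making this precise requires uniform-in-$k$ bounds on $\vth_i$ in a neighbourhood of $\tilde{\vth}_i^{(0)}$ together with careful handling of the linear-in-$k$ far-field contribution dictated by the Silver--M\"uller radiation condition.
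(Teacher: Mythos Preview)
Your proposal is correct and follows essentially the same route as the paper: both arguments work from the symmetric representation (\ref{eqn:symnmc}), establish the static integration-by-parts identity (\ref{eqn:intptstheta0}) for $\tilde{\vth}_i^{(0)}$ (the paper derives it by integrating $\int_{B\cup B^c}\bigl(\nabla\times\tilde{\mu}_r^{-1}\nabla\times\tilde{\vth}_i^{(0)}\cdot\tilde{\vth}_r^{(0)} + \nabla\times\tilde{\mu}_r^{-1}\nabla\times\tilde{\vth}_r^{(0)}\cdot\tilde{\vth}_i^{(0)}\bigr)\dif\vxi=0$ by parts, which is your averaging step), substitute $\vth_i=\tilde{\vth}_i^{(0)}+O(k)$ into (\ref{eqn:symnmc}) to collapse it to ${\mathcal N}_{ri}^{(0)}$, and invoke~\cite{LedgerLionheart2016,LedgerLionheart2019} for the P\'olya--Szeg\"o identification. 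Your write-up is in fact more explicit than the paper's about how each term in (\ref{eqn:symnmc}) is handled, and you correctly flag the $O(k^2)$-versus-$O(k)$ residual as the delicate point---the paper does not spell this out either.
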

\begin{proof}
For the transmission problem  (\ref{eqn:transthwavs})  we can show that
\begin{align}
 \int_{B\cup B^c} \tilde{\mu}_r^{-1} \nabla \times \tilde{\vth}_i^{(0)}  \cdot \nabla \times \tilde{\vth}_r^{(0)} \dif \vxi
=  [\tilde{\mu}_r^{-1}]\int_B (\ve_r \cdot \nabla \times \tilde{\vth}_i^{(0)} + \ve_i \cdot \nabla \times\tilde{\vth}_r^{(0)})  \dif \vxi , \label{eqn:intptstheta0}
\end{align}
which is obtained  by integrating by parts $\int_{B\cup B^c} \left ( \nabla \times {\tilde{\mu}}_r^{-1} \nabla \times \tilde{\vth}_i^{(0)}  \cdot \tilde{\vth}_r^{(0)}  + \nabla \times \tilde{\mu}_r^{-1} \nabla \times\tilde{\vth}_r^{(0)} \cdot \tilde{\vth}_i^{(0)} \right )  \dif \vxi =0$. Then, for small $k$, we have $\vth_i= \tilde{\vth}_i^{(0)} + O(k)$ and, by using (\ref{eqn:intptstheta0}) in (\ref{eqn:symnmc}), we
find that $  {\mathcal N}_{ri} -\check{{\mathcal C}}_{ri}$ reduces to $ {\mathcal N}^{(0)}_{ri}$ and this is also equivalent to the coefficients of the P\'oyla-Szeg\"o tensor ${\mathcal T}[\alpha B, \mu_r]$ of an object $B$ for a contrast ${\mu}_r$, as shown by Theorem 3.2 of ~\cite{LedgerLionheart2019} and Lemma 3 of~\cite{LedgerLionheart2016}. Note that Lemma 3 of ~\cite{LedgerLionheart2016} considered the question of connectedness of $B$ and showed this reduction holds independent of the first Betti number of $B$.
\end{proof}

By integration by parts and then using $\vth_i= \tilde{\vth}_i^{(0)} + O(k)$ we get
\begin{align}
{\mathcal A}_{ri}   := & \frac{ \im  \alpha^3  }{2\alpha k} \ve_r \cdot  \int_{B}  \left (  \nabla_\xi\times\mu_r^{-1} \nabla_\xi \times \vth_i - k^2 \alpha^2 \vth_i  \right ) \dif \vxi \nonumber \\
=&-\frac{ \im   \alpha^4 k }{2} \ve_r \cdot  \int_{B\cup B^c}   \vth_i   \dif \vxi = O(k),
\end{align}
as $k \to 0$.

Using the above results and Lemma~\ref{lemma:btensmallk} means that (\ref{eqn:mainalt}) now becomes
\begin{align}
(\vHd(\vx)) = & \frac{e^{\im k r}}{4\pi}  \left \{
  \frac{1}{r^3} \left ( 3 \hat{\vr} \cdot ({\mathcal T} [\alpha B, \mu_r] \vHb(\vz)  ) \hat{\vr} - {\mathcal T} [\alpha B, \mu_r] \vHb(\vz)   \right ) \right . \nonumber \\
&  -\frac{\im k}{ r^2 } \left ( 3  \hat{\vr} \cdot ({\mathcal T} [\alpha B, \mu_r] \vHb(\vz)  ) \hat{\vr} - {\mathcal T} [\alpha B, \mu_r]  \vHb(\vz)   -  
  \hat{\vr} \times  (  {\mathcal T} [\alpha B, \epsilon_r] \vEb(\vz)) \right )  \nonumber \\
  &  \left . - \frac{k^2}{r} \left (    \hat{\vr} \times \hat{\vr } \times ( {\mathcal T} [\alpha B, \mu_r] \vHb(\vz) )
-  \hat{\vr} \times (  {\mathcal T} [\alpha B, \epsilon_r] \vEb(\vz)) \right ) \right \} +\vR(\vx) \label{eqn:mainalt2},
  \end{align}
which agrees with the known results for low-frequency scattering from dielectric and permeable bodies (e.g.~\cite{LedgerLionheart2015pert}).

\subsubsection{Small $\alpha$ and small $\nu$}
We now consider the case of small $\alpha$ and  {require} $k,\epsilon_r,\mu_r$  to change  {with $\alpha$} such that  $\nu$ is also small. For small $\alpha, \nu$ then $\vth_i= \tilde{\vth}_i^{(0)} + O(\alpha)$  as $\alpha \to 0$ where $\tilde{\vth}_i^{(0)}$ solves (\ref{eqn:transthwavs}),
which allow us to show the following about the coefficients of ${\mathcal M}$:
\begin{lemma}
For small $\alpha$ and $\nu $, $\vth_i= \vth_i^{(0)} +O(\alpha)$ as $\alpha \to 0$ and ${\mathcal M} _{ri} = {\mathcal N}_{ri} -\check{\mathcal C}_{ri }$ reduces to ${\mathcal M}_{ri} = {\mathcal N}^{(0)}_{ri} +O(\alpha^5) $ where
\begin{align}
{\mathcal N} ^{(0)}_{ri} = \frac{\alpha^3 }{4} \int_{B\cup B^c} \tilde{\mu}_r^{-1} \nabla \times \tilde{\vth}_i^{(0)} \cdot \nabla \times \tilde{\vth}_r^{(0)} \dif \vxi + [\tilde{\mu}_r^{-1}] \int_B \delta_{ri} \dif \vxi, 
\end{align}
which are also the coefficients of the P\'oyla-Szeg\"o tensor ${\mathcal T} [\alpha B, \mu_r]$ of an object $B_\alpha$ for a contrast $\mu_r$.
\end{lemma}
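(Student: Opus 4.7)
The plan is to mirror the proof of Lemma~\ref{lemma:smallkmten}, adapting it to the regime where $\alpha\to 0$ and $\nu\to 0$ jointly rather than taking $k\to 0$ with $\sigma_*=0$. The two main ingredients will be (i) establishing that $\vth_i = \tilde{\vth}_i^{(0)} + O(\alpha)$ in a suitable norm, where $\tilde{\vth}_i^{(0)}$ solves (\ref{eqn:transthwavs}), and (ii) substituting this expansion into the symmetric representation (\ref{eqn:symnmc}) of ${\mathcal M}_{ri} = {\mathcal N}_{ri} - \check{\mathcal C}_{ri}$ and tracking the orders of each term carefully.

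First I would compare (\ref{eqn:transth}) with (\ref{eqn:transthwavs}). The interior equation differs by a term of order $k^2\alpha^2 \epsilon_r = \nu + k^2\alpha^2$, the exterior equation by $k^2\alpha^2$, and the source picks up $\nu\,\ve_i\times\vxi$. Since both $\alpha$ and $\nu$ are assumed small and $k^2\alpha^2 = \nu/(\epsilon_r-1)$ is controlled by these quantities, a standard perturbation argument on the associated weak form (essentially a unit-cell version of Lemma~\ref{lemma:energy}) delivers $\vth_i = \tilde{\vth}_i^{(0)} + O(\alpha)$ together with the corresponding control of the curls in $L^2$.

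Next I would evaluate each term of (\ref{eqn:symnmc}) using this expansion. Those carrying an explicit $\alpha^2 k^2$ factor contribute $O(\alpha^5)$ once the overall $\alpha^3$ prefactor is included. The combinations built from $\int_{B\cup B^c}\tilde{\mu}_r^{-1}\nabla\times\vth_i\cdot\nabla\times\vth_r\,\dif\vxi$ and the boundary-type terms $[\tilde\mu_r^{-1}]\int_B \ve_r\cdot\nabla\times\vth_i\,\dif\vxi$ collapse at leading order via the analogue of (\ref{eqn:intptstheta0}) for $\tilde{\vth}_i^{(0)}$ (obtained by integrating by parts the bilinear form associated with (\ref{eqn:transthwavs})), and this leaves precisely the two contributions defining ${\mathcal N}^{(0)}_{ri}$. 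The delicate piece is the interior integral carrying the $\frac{\alpha^3}{4\nu}$ prefactor; for this I would invoke the strong form (\ref{eqn:transth}a), namely $\nabla_\xi\times\mu_r^{-1}\nabla_\xi\times\vth_i = k^2\alpha^2\epsilon_r\vth_i + \nu\,\ve_i\times\vxi$ in $B$, so that each factor is $O(\nu)$ up to constants, and the apparent $1/\nu$ singularity is cancelled, yielding a net contribution of $O(\alpha^3\nu)=O(\alpha^5)$.

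The main obstacle will be making this $1/\nu$ estimate rigorous: the formal algebra renders the cancellation transparent, but one needs a uniform $L^2(B)$ bound on $\vth_i$ as $\alpha,\nu\to 0$, which ultimately rests on the well-posedness and energy bound for (\ref{eqn:transthwavs}) combined with the perturbation argument from step one. Once these estimates are in place, the identification of ${\mathcal N}^{(0)}_{ri}$ with the coefficients of the P\'oyla--Szeg\"o tensor ${\mathcal T}[\alpha B,\mu_r]$ is identical to the closing argument of Lemma~\ref{lemma:smallkmten}, invoking Theorem 3.2 of~\cite{LedgerLionheart2019} and Lemma 3 of~\cite{LedgerLionheart2016}, the latter ensuring the reduction is independent of the first Betti number of $B$.
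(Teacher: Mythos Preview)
Your proposal is correct and follows essentially the same approach as the paper, which simply states that the proof is similar to Lemma~\ref{lemma:smallkmten}, uses the identity (\ref{eqn:intptstheta0}), and the expansion $\vth_i = \tilde{\vth}_i^{(0)} + O(\alpha)$. In fact you supply considerably more detail than the paper's own proof, in particular your careful treatment of the $1/\nu$ prefactor via the strong form (\ref{eqn:transth}a) and your remarks on the uniform $L^2$ bound, which the paper does not spell out.
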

\begin{proof}
The proof is similar to the proof of  Lemma~(\ref{lemma:smallkmten}) and  again uses (\ref{eqn:intptstheta0}). For small $\alpha, \nu$, we have $\vth_i= \tilde{\vth}_i^{(0)} + O(\alpha)$ as $\alpha \to 0$ and find that $  {\mathcal N}_{ri} -\check{{\mathcal C}}_{ri}$ reduces to $ {\mathcal N}^{(0)}_{ri}$.
\end{proof}

Similarly, by integration by parts and then using $\vth_i= \tilde{\vth}_i^{(0)} + O(\alpha)$, we get
\begin{align}
{\mathcal A}_{ri}   := & \frac{ \im  \alpha^3  }{2\alpha k} \ve_r \cdot  \int_{B}  \left (  \nabla_\xi\times\mu_r^{-1} \nabla_\xi \times \vth_i - k^2 \alpha^2 \vth_i  \right ) \dif \vxi \nonumber \\
=&-\frac{ \im   \alpha^4 k }{2} \ve_r \cdot  \int_{B\cup B^c}   \vth_i   \dif \vxi = O(\alpha^4),
\end{align}
as $\alpha \to 0$.

Using the above results and Lemma~\ref{lemma:btensmallk}  means that (\ref{eqn:mainalt}) now becomes
\begin{align}
(\vHd(\vx)) = & \frac{e^{\im k r}}{4\pi}  \left \{
  \frac{1}{r^3} \left ( 3 \hat{\vr} \cdot ({\mathcal T}[\alpha B, \mu_r] \vHb(\vz)  ) \hat{\vr} - {\mathcal T} [\alpha B,\mu_r] \vHb(\vz)   \right ) \right . \nonumber \\
&  -\frac{\im k}{ r^2 } \left ( 3  \hat{\vr} \cdot ({\mathcal T} [\alpha B,\mu_r] \vHb(\vz)  ) \hat{\vr} - {\mathcal T} [\alpha B,\mu_r]  \vHb(\vz)   -  
  \hat{\vr} \times  (  {\mathcal T} [\alpha B,\epsilon_r] \vEb(\vz)) \right )  \nonumber \\
  &  \left . - \frac{k^2}{r} \left (    \hat{\vr} \times \hat{\vr } \times ( {\mathcal T} [\alpha B,\mu_r] \vHb(\vz) )
-  \hat{\vr} \times (  {\mathcal T} [\alpha B,\epsilon_r] \vEb(\vz)) \right ) \right \} +\vR(\vx) , \label{eqn:mainaltsmallalpha}
  \end{align}
which agrees with the known results for  scattering from small bodies~\cite{ammari2001,ammari2005}. 

\begin{remark}
While the forms for $\vHd(\vx)$ derived in (\ref{eqn:mainalt}) and (\ref{eqn:mainaltsmallalpha}) look very similar, the former holds for small $k$ and requires $\sigma_*=0$ and the latter is for the case of small $\alpha$ and small $ \nu$, but does not require $\sigma_*=0$.
\end{remark}

\section*{Acknowledgements}

Paul D. Ledger gratefully acknowledges the financial support received from EPSRC in the form of grant  EP/V009028/1. William R. B. Lionheart gratefully acknowledges the financial support received from EPSRC in the form of grant  EP/V009109/1 and would like to thank the Royal Society for the financial support received from a Royal Society Wolfson Research Merit Award and  a Royal Society  Global Challenges Research Fund grant CH160063.

\section*{Declaration}

The authors have no competing interests to declare that are relevant to the content of this article.

\bibliographystyle{plainurl}
\bibliography{paperbib}
\end{document}